\renewcommand{\@seccntformat}[1]
{{\csname the#1\endcsname}.\hspace{0.3em}}
\renewcommand{\section}{\@startsection
{section}
{1}
{0mm}
{-1.5\baselineskip}
{\baselineskip}
{\bfseries\normalsize}}
\renewcommand{\subsection}{\@startsection
{subsection}
{2}
{0mm}
{-\baselineskip}
{0.5\baselineskip}
{\normalsize\itshape}}
\theoremstyle{plain}
\newtheorem{theorem}{Theorem}[section]
\newtheorem{corollary}[theorem]{Corollary}
\newtheorem{lemma}[theorem]{Lemma}
\newtheorem{prop}[theorem]{Proposition}
\newtheorem{claim}{Claim}[section]
\newtheorem*{BT}{Bubble convergence theorem}
\theoremstyle{definition}
\newtheorem*{defin*}{Definition}
\theoremstyle{remark}
\newtheorem{example}{Example}[section]
\newtheorem*{example*}{Example}
\newtheorem*{remark*}{Remark}
\DeclareMathAlphabet{\matheur}{U}{eur}{m}{n}
\DeclareMathAlphabet{\matheus}{U}{eus}{m}{n}
\DeclareMathAlphabet{\matheuf}{U}{euf}{m}{n}
\numberwithin{equation}{section}
\newcommand{\abs}[1]{\left\lvert#1\right\rvert}
\DeclareMathOperator{\Hom}{Hom}
\DeclareMathOperator{\diam}{diam}
\begin{document}

\author{Gerasim  Kokarev
\\ {\small\it D\'epartement de Math\'ematiques, Universit\'e de
  Cergy-Pontoise}
\\ {\small\it Site de Saint Martin, 2, avenue Adolphe Chauvin}
\\ {\small\it 95302 Cergy-Pontoise Cedex, France}
\\ {\small\it Email: {\tt Gerasim.Kokarev@u-cergy.fr}}
}

\title{Curvature and bubble convergence of harmonic maps}
\date{}
\maketitle

\begin{abstract}
\noindent
We explore geometric aspects of bubble convergence for harmonic maps. More precisely, we show that the formation of bubbles is characterised by the local excess of curvature on the target manifold. We give a universal estimate for curvature concentration masses at each bubble point  and show that there is no curvature loss in the necks.  Our principal hypothesis is that the target manifold is K\"ahler.
\end{abstract}

\medskip
\noindent
{\small
{\bf Mathematics Subject Classification (2000):} 58E20, 53C43
}

\section{Introduction}
\label{intro}
Let $\Sigma$ and $M$ be a closed Riemannian surface and a closed Riemannian manifold respectively. For a map $u:\Sigma\to M$ we consider the energy functional
$$
E(u)=\frac{1}{2}\int_\Sigma\abs{du}^2d\mathit{Vol}_\Sigma,
$$
where $\abs{du}^2$ denotes the norm of the differential $du$ determined by the metrics on $\Sigma$ and $M$. The critical points of $E(u)$ are called {\em harmonic maps}. For a sequence of harmonic maps of bounded energy its weak limit in $W^{1,2}(\Sigma,M)$ is described by the following celebrated theorem of Sacks and Uhlenbeck~\cite{SaU}, see also~\cite{Jo,Pa96}.
\begin{BT}
Let $u_n:\Sigma\to M$, where $n=1, 2, \ldots$, be a sequence of harmonic maps of bounded energy. Then it contains a subsequence (also denoted by $u_n$) that converges weakly in $W^{1,2}(\Sigma,M)$ to a harmonic map $u:\Sigma\to M$. Moreover, there exists a finite number of points $\{x_1, \ldots, x_k\}$ in $\Sigma$ such that $u_n$ converges to $u$ in $C^1$-topology uniformly on compact subsets in the complement of $\{x_1, \ldots, x_k\}$ and
$$
\frac{1}{2}\abs{du_n}^2d\mathit{Vol}_\Sigma\rightharpoonup\frac{1}{2}\abs{du}^2d\mathit{Vol}_\Sigma
+\sum_{i=1}^{k}m_i\delta_{x_i}\quad\text{as measures},
$$
where each $m_i>0$ is a sum of the energies of non-constant harmonic $2$-spheres in $M$.
\end{BT}

The theorem says that the loss of energy in the limit is concentrated precisely at the points $\{x_1, \ldots, x_k\}$, called {\em bubble points}. Moreover, as is known~\cite{SaU}, there is a constant $\varepsilon_*>0$,  depending on the geometry of $M$, such that $m_i\geqslant \varepsilon_*$.  The fact that the $m_i$'s are sums of the energies of harmonic $2$-spheres, so-called {\em bubbles},  allows to obtain more explicit estimates for these quantities in some examples. For instance, if $M$ is a complex projective space, more generally, a complex Grassmannian, or a unitary group, the values $m_i$ are multiples of $4\pi$. The important part of the theorem is the so-called {\em energy identity}: there is no loss of energy in the necks between the limit map $u$ and the bubbles or between the bubbles themselves. For other results describing the formation of bubbles and  applications we refer to~\cite{Pa96, CT}.
Similar bubbling phenomena have been discovered and studied for a number of other equations, see~\cite{Gro,Lions85}, and have been central issues in the non-linear analysis for decades since the work~\cite{SaU}.

The purpose of this paper is to study geometric aspects of this phenomenon for harmonic maps. To illustrate 
the basic link between curvature and bubbling we recall the following fact. As is known~\cite{Sch}, if a domain $D\subset\Sigma$ is mapped by the $u_n$'s to a region in $M$ where the sectional curvature is non-positive, then the norms $\abs{du_n}(x)$, where $x\in D$, can be bounded in terms of the energy $E(u_n)$. In particular, under the hypotheses of the theorem  no bubbles can appear in $D$. Thus, intuitively one expects that the {\em appearance of bubbles is related to the local excess of positive curvature} on the images of maps. Below we make this intuition precise when the target manifold $M$ is K\"ahler. In addition, we obtain explicit lower bounds for the energies of harmonic maps, and in particular, lower bounds for the energy concentration masses $m_i$.

Now we outline the organisation of the paper. The principal results are stated in the following Sect.~\ref{res} below. First, we introduce the {\em curvature densities} of harmonic maps and describe their properties. These quantities are used to detect bubbling in the sequences of harmonic maps, and besides, yield explicit energy bounds. Then we state the main result, Theorem~\ref{MT}. Loosely speaking, it says that the energy and curvature densities blow up at the same collection of points. It also gives a universal estimate for the amount of curvature consumed by each bubble. In Sect.~\ref{prem} we give a background material on harmonic maps valued in K\"ahler manifolds, and explain the properties of the curvature densities. The proof of the main result, Theorem~\ref{MT}, appears in the following Sect.~\ref{proofMT0} and~\ref{proofMT}. The latter also contains an account on the bubble tree construction for sequences of harmonic maps, necessary to explain the ``no curvature loss in the necks'' statement. 

This paper is a part of author's programme to link the blow-up analysis  for the harmonic map problem to the geometry of underlying manifolds. In a forthcoming paper we shall study analogous links between curvature and bubbling in the harmonic map heat flow.

\smallskip
\noindent{\em Acknowledgements.} During the work on the paper the author has been supported by the EU Commission through the Marie Curie actions scheme.

\section{Statements of the main results}
\label{res}
\subsection{K\"ahler targets and curvature densities}
Let $\Sigma$ and $M$ be a Riemannian surface and a K\"ahler manifold endowed with the K\"ahler forms $\omega_\Sigma$ and $\omega_M$ respectively. For a smooth map $u:\Sigma\to M$ denote by
$$
\partial u:T^{1,0}\Sigma\to T^{1,0}M\quad\text{and}\quad \bar\partial u:T^{0,1}\Sigma\to T^{1,0}M
$$
the corresponding components of the complexified differential $du$. By $e(u)$ we also denote the energy density $(1/2)\abs{du}^2$, and by $e'(u)$ and $e''(u)$ the squared Hermitian norms $\abs{\partial u}^2$ and $\abs{\bar\partial u}^2$. They satisfy the relations
\begin{equation}
\label{erels}
e(u)=e'(u)+e''(u)\quad\text{and}\quad\left(e'(u)-e''(u)\right)\omega_\Sigma=u^*\omega_M,
\end{equation}
see Sect.~\ref{prem} for details.

Let $u:\Sigma\to M$ be a harmonic map, that is a solution of the Euler-Lagrange equation for the energy
$$
E(u)=\int_Me(u)d\mathit{Vol}_\Sigma.
$$
As is known~\cite{To,Wood}, each function $e'(u)$ and $e''(u)$ is either identically zero (and then $u$ is anti-holomorphic or holomorphic respectively) or has only isolated zeroes. Moreover, the isolated zeroes have a well-defined order $r$ -- in local coordinates near zeroes the functions $e'(u)$ and $e''(u)$ have the form $z^{2r}\abs{\psi}^2(z)$, where $\abs{\psi}^2(z)$ does not vanish.
\begin{defin*}
Let $u:\Sigma\to M$ be a harmonic map, and suppose that each function $e'(u)$ and $e''(u)$ has isolated zeroes only. By the {\em curvature densities} $q'(u)$ and $q''(u)$ we call the functions defined by the following relations:
$$
-q'(u)e'(u)\omega_\Sigma=\langle u^*\Omega(\partial u(v)),\partial u(v)\rangle,
$$
$$
~~q''(u)e''(u)\omega_\Sigma=\langle u^*\Omega(\bar\partial u(\bar v)),\bar\partial u(\bar v)\rangle.
$$
Here $\Omega$ is the curvature form on $M$, $\langle\cdot,\cdot\rangle$ is the scalar product induced by the Hermitian metric on $M$, and $v$ is a unit vector in $T^{1,0}\Sigma$. If either $e'(u)$ or $e''(u)$ is identically zero, then we set $q'(u)$ or $q''(u)$  to be zero identically respectively.
\end{defin*}

The relations above define the curvature densities $q'(u)$ and $q''(u)$, as {\em certain norms} of the pull-back $u^*\Omega$ of the curvature form, everywhere except for the zeroes of $e'(u)$ and $e''(u)$ respectively. As is shown in Sect.~\ref{prem}, these singularities are removable. The amount of positive curvature related to the energy concentration in the bubble convergence theorem will be measured by the non-negative parts of $q'(u)$ and $q''(u)$; they are denoted by $q'_+(u)$ and $q''_+(u)$ respectively,
$$
q'_+(u)=\max\{q'(u),0\}\quad\text{and}\quad q''_+(u)=\max\{q''(u),0\}.
$$
By $q_+(u)$ we denote the total positive curvature density, that is the sum $q'_+(u)+q''_+(u)$.

Before proceeding we make two remarks. First, intuitively the curvature densities behave like functions that are quadratic with respect to the first derivatives of $u$. In particular, the Cauchy-Schwarz inequality yields the estimates:
\begin{equation}
\label{CS1}
q'_+(u)\leqslant\abs{q'(u)}\leqslant\sqrt{2}\!\abs{\Omega}e(u),
\end{equation}
\begin{equation}
\label{CS2}
q''_+(u)\leqslant\abs{q''(u)}\leqslant\sqrt{2}\!\abs{\Omega}e(u),
\end{equation}
where $\abs{\Omega}$ stands for the norm of the curvature operator $\Lambda^{1,1}T^*M\to \Lambda^{1,1}T^*M$ in the natural Hermitian metric on the space of $(1,1)$-forms, see Sect.~\ref{prem}. Second, as follows from definitions, the integrals
$$
Q'_+(u)=\int_\Sigma q'_+(u)d\mathit{Vol}_\Sigma\quad\text{and}\quad 
Q''_+(u)=\int_\Sigma q''_+(u)d\mathit{Vol}_\Sigma
$$
are invariant under the conformal change of a metric on the domain $\Sigma$, and so is their sum
$Q_+(u)$.

The following result gives bounds for $Q_+(u)$ via the genus $\varrho$ of $\Sigma$, and the multiplicities of the zeroes of $\partial u$ and $\bar\partial u$.
\begin{theorem}
\label{qbounds}
Let $\Sigma$ and $M$ be a closed Riemannian surface and a K\"ahler manifold respectively. Suppose that a harmonic map $u:\Sigma\to M$ is not anti-holomorphic. Then we have
$$
Q'_+(u)\geqslant\pi\left(\sum_{\partial u(z_i)=0}r'_i+2-2\varrho\right),
$$
where $r'_i$ is the multiplicity of $z_i$, and $\varrho$ is the genus of $\Sigma$. If a map $u$ is not holomorphic, then we have an analogous inequality for $Q''_+(u)$:
$$
Q''_+(u)\geqslant\pi\left(\sum_{\bar\partial u(z_i)=0}r''_i+2-2\varrho\right).
$$
\end{theorem}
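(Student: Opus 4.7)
My plan is to derive a pointwise Bochner-type identity for $\log e'(u)$, integrate it via Stokes' theorem, and combine it with Gauss--Bonnet to extract the lower bound. As Section~\ref{prem} makes precise, the K\"ahler assumption on $M$ together with the harmonicity of $u$ makes $\partial u$ a holomorphic section of the Hermitian vector bundle $E = T^{*1,0}\Sigma\otimes u^*T^{1,0}M$ with $|\partial u|^2=e'(u)$. The standard Chern curvature identity for a holomorphic section then gives, on the open set $\{\partial u\neq 0\}$,
$$-\partial\bar\partial\log e'(u) = \frac{\langle\Theta_E\,\partial u,\partial u\rangle}{e'(u)} - \tau,$$
where $\tau\geq 0$ is a $(1,1)$-form coming from the second fundamental form of the line subbundle $\mathbb{C}\partial u\subset E$. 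Decomposing $\Theta_E = \Theta_{T^{*1,0}\Sigma}\otimes\mathrm{id} + \mathrm{id}\otimes u^*\Theta_{T^{1,0}M}$, the first summand contributes the Gauss curvature $K_\Sigma$ (via $c_1(T^{1,0}\Sigma)=K_\Sigma\omega_\Sigma/2\pi$), while the second summand, contracted with $\partial u$, produces $q'(u)$ by the very definition given in Section~2. This yields the scalar Bochner identity
$$\Delta_\Sigma\log e'(u) = 2K_\Sigma - 2q'(u) - 2\sigma, \qquad \sigma\geq 0,$$
valid pointwise off the zeros of $\partial u$.

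Near each zero $z_i$ of $\partial u$ of multiplicity $r'_i$, the local expansion $e'(u)(z)=|z-z_i|^{2r'_i}|\psi_i|^2(z)$ with $|\psi_i|^2$ smooth and non-vanishing (from Section~\ref{prem}) shows that $\log e'(u)$ differs from $2r'_i\log|z-z_i|$ by a smooth function. Excising small geodesic disks $B_\varepsilon(z_i)$, applying Stokes' theorem on $\Sigma\setminus\bigcup_iB_\varepsilon(z_i)$, and letting $\varepsilon\to 0$ produces the boundary contribution $-4\pi\sum_i r'_i$ to $\int\Delta_\Sigma\log e'(u)\,d\mathit{Vol}_\Sigma$. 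Integrating the Bochner identity in this manner and invoking Gauss--Bonnet, $\int_\Sigma K_\Sigma\,d\mathit{Vol}_\Sigma=2\pi(2-2\varrho)$, yields
$$-4\pi\sum_i r'_i = 4\pi(2-2\varrho) - 2Q'(u) - 2\int_\Sigma\sigma\,d\mathit{Vol}_\Sigma.$$
Discarding the non-negative term $\int\sigma\,d\mathit{Vol}_\Sigma$ and using the pointwise bound $q'_+(u)\geq q'(u)$, hence $Q'_+(u)\geq Q'(u)$, yields the claimed lower bound. The argument for $Q''_+(u)$ is formally identical, with $\partial u$ replaced by $\bar\partial u$ (which is a holomorphic section of the corresponding bundle by the same K\"ahler + harmonicity argument).

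The most delicate step is the Bochner identity itself, with the precise sign and coefficient on the non-negative term $\sigma$; this rests on the Gauss--Codazzi equations for the holomorphic line subbundle $\mathbb{C}\partial u\subset E$ and requires careful bookkeeping with the paper's sign conventions for the curvature form $\Omega$ and the normalization of $\omega_\Sigma$. The remaining ingredients --- the Poincar\'e--Lelong computation at the isolated zeros of $\partial u$ and the Gauss--Bonnet formula --- are classical.
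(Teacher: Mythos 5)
Your strategy coincides with the paper's: the proof integrates the Bochner-type identity for $\log e'(u)$ (Corollary~\ref{log_to}, quoted from Toledo and Wood) over $\Sigma$ with small disks excised around the zeros of $\partial u$, computes the boundary contributions from the local form $\abs{z}^{2r'_i}\abs{\psi}^2$, and concludes with Gauss--Bonnet. Your re-derivation of that identity from the holomorphicity of $\partial u$ as a section of $T^{*1,0}\Sigma\otimes u^*T^{1,0}M$ is the standard underlying route and is sound in principle.

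As written, however, the argument does not close, and the failure is exactly at the point you flag as ``delicate'' and then leave unresolved: the sign of the second-fundamental-form term $\sigma$. Your integrated identity
$$
-4\pi\sum_i r'_i = 4\pi(2-2\varrho) - 2Q'(u) - 2\int_\Sigma\sigma\,d\mathit{Vol}_\Sigma
$$
solves to $Q'(u)=2\pi\bigl(\sum_i r'_i+2-2\varrho\bigr)-\int_\Sigma\sigma\,d\mathit{Vol}_\Sigma$, so discarding the non-negative term $\int_\Sigma\sigma$ produces an \emph{upper} bound on $Q'(u)$; chaining this with $Q'_+(u)\geqslant Q'(u)$ yields nothing. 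For the argument to work, $\sigma$ must enter with the opposite sign: the Cauchy--Schwarz term in $\partial\bar\partial\log\norm{s}^2$ for a holomorphic section $s$ makes $\log\norm{s}^2$ \emph{more} subharmonic, not less, so with your (analyst's) convention for $\Delta_\Sigma$ the identity should read $\Delta_\Sigma\log e'(u)=\text{(curvature terms)}+2\sigma$ with $\sigma\geqslant 0$. This is precisely what Corollary~\ref{log_to} encodes: $\tfrac14\Delta\log e'(u)=-\alpha'(u)+q'(u)-\tfrac12 K_\Sigma$ with $\alpha'(u)\geqslant 0$ and $\Delta$ the non-negative Laplacian, whence $q'(u)\geqslant\tfrac14\Delta\log e'(u)+\tfrac12 K_\Sigma$ pointwise and the integration gives the lower bound directly. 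There is also a factor-of-two mismatch: with the sign corrected, your coefficients would yield $2\pi\bigl(\sum_i r'_i+2-2\varrho\bigr)$ rather than the stated $\pi\bigl(\sum_i r'_i+2-2\varrho\bigr)$, which signals a normalization slip in passing from $i\partial\bar\partial$ to $\Delta_\Sigma$ or in identifying the curvature contraction with $q'(u)$; the constants in Corollary~\ref{log_to} are the ones that make the bookkeeping come out as claimed.
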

\begin{corollary}
\label{cor:qbounds}
Let $u$ be a harmonic $2$-sphere in a K\"ahler manifold $M$. If $u$ is not anti-holomorphic (or holomorphic), then the quantity $Q'_+(u)$ (or $Q''_+(u)$) is at least $2\pi$.
\end{corollary}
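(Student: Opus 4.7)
The plan is to apply Theorem~\ref{qbounds} directly in the special case $\Sigma=S^2$. For the $2$-sphere the genus $\varrho$ vanishes, so under the hypothesis that $u$ is not anti-holomorphic the first inequality of the theorem reduces to
$$
Q'_+(u)\geqslant\pi\Bigl(\sum_{\partial u(z_i)=0}r'_i+2\Bigr).
$$
Since every multiplicity $r'_i$ is a positive integer, the sum on the right is non-negative --- it is simply zero if $\partial u$ has no zeros on $S^2$ --- and one concludes at once that $Q'_+(u)\geqslant 2\pi$.

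The argument for $Q''_+(u)$ is entirely parallel: under the hypothesis that $u$ is not holomorphic, the second inequality of Theorem~\ref{qbounds} combined with $\varrho=0$ gives
$$
Q''_+(u)\geqslant\pi\Bigl(\sum_{\bar\partial u(z_i)=0}r''_i+2\Bigr)\geqslant 2\pi.
$$

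There is no real obstacle here; the whole non-trivial content sits in Theorem~\ref{qbounds}, and the corollary is a specialization. Its value is conceptual rather than technical: it exhibits a universal minimum amount of positive curvature, namely $2\pi$, consumed by any non-(anti-)holo\-morphic harmonic sphere in a K\"ahler target. This is precisely the quantitative input that will let the curvature densities detect bubble formation in the main result, Theorem~\ref{MT}, just as the $\varepsilon_*$-threshold controls the energy concentration masses $m_i$ in the bubble convergence theorem.
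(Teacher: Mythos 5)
Your proof is correct and is exactly the specialization the paper intends: set $\varrho=0$ in Theorem~\ref{qbounds} and note that the sum of the multiplicities $r'_i$ (resp.\ $r''_i$) is non-negative, giving $Q'_+(u)\geqslant 2\pi$ (resp.\ $Q''_+(u)\geqslant 2\pi$). The paper gives no separate argument for the corollary, so there is nothing further to compare.
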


The combination of these inequalities with the estimates~\eqref{CS1}--\eqref{CS2} gives lower bounds for the energies of harmonic maps, which seem to have been unnoticed. In particular, for a non-trivial harmonic $2$-sphere $u$, we have
\begin{equation}
\label{sphereMB}
E(u)\geqslant\sqrt{2}\pi\left(\max\abs{\Omega}\right)^{-1},
\end{equation}
where $\abs{\Omega}$ stands for the norm of the curvature operator whose maximum is taken over all points in $M$. This implies also the lower bound for the energy concentration masses $m_i$ in the Sacks-Uhlenbeck theorem, and prevents the occurrence of bubbling if the limit amount of energy is strictly less than the right-hand side in~\eqref{sphereMB}.

\subsection{Bubble convergence via curvature densities}
Let $u_n:\Sigma\to M$ be a sequence of harmonic maps of bounded energy. After a selection of a subsequence we can suppose that it converges to a harmonic map $u$ in the sense of the bubble convergence. Further, by~\eqref{CS1}--\eqref{CS2} the integrals  $Q_+(u_n)$ are bounded, and we can suppose that the measures $q_+(u_n)d\mathit{Vol}_\Sigma$ converge weakly to some measure $dQ_+$. Our main result below shows that the singular part of $dQ_+$ is supported precisely at the set of  bubble points and establishes an identity for $dQ_+$.
\begin{theorem}
\label{MT}
Let $\Sigma$ and $M$ be a closed Riemannian surface and a closed K\"ahler manifold respectively. Let $u_n:\Sigma\to M$ be a sequence of harmonic maps of bounded energy that converges to a harmonic map $u$ in the sense of bubble convergence, and let $\{x_1, \ldots, x_k\}$ be the set of the corresponding bubble points. Then the limit measure $dQ_+$ satisfies the identity
$$
dQ_+=q_+(u)d\mathit{Vol}_\Sigma+\sum_{i=1}^k q_i\delta_{x_i},
$$
where each $q_i\geqslant2\pi$. Moreover, each quantity $q_i$ is equal to the sum of $Q_+(\phi)\geqslant 2\pi$ over all non-trivial bubble spheres $\phi$ corresponding to the point $x_i$.
\end{theorem}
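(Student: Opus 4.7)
The proof naturally splits into three parts: convergence of $q_+(u_n)$ away from the bubble set, extraction of the bubbles at each $x_i$, and a no curvature loss statement in the neck regions. The last of these reduces to the standard energy no-neck via the pointwise comparison~\eqref{CS1}--\eqref{CS2}.

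Away from the bubble set, the $C^1_{\mathrm{loc}}(\Sigma\setminus\{x_1,\ldots,x_k\})$-convergence of $u_n$ to $u$ forces the pull-backs $u_n^*\Omega$ and the norms $e'(u_n),e''(u_n)$ to converge uniformly on compact subsets, so that $q_+(u_n)\to q_+(u)$ there; the isolated zeros of $e'(u),e''(u)$ are handled by the crude bound $q_+\leqslant\sqrt{2}\abs{\Omega}e$. Against any test function supported away from $\{x_1,\ldots,x_k\}$ the measure $dQ_+$ thus integrates to $\int q_+(u)d\mathit{Vol}_\Sigma$, so $dQ_+-q_+(u)d\mathit{Vol}_\Sigma$ is a non-negative Radon measure supported on $\{x_1,\ldots,x_k\}$, and hence of the form $\sum q_i\delta_{x_i}$.

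To identify the $q_i$'s, I would apply the usual Sacks--Uhlenbeck rescaling near each $x_i$: pick $z_n\to x_i$ and $\lambda_n\to 0$ so that $\tilde u_n(y):=u_n(z_n+\lambda_n y)$ converges in $C^1_{\mathrm{loc}}$, away from finitely many secondary bubble points, to a non-constant harmonic bubble sphere $\phi$. By the conformal invariance of $Q_+$ noted in Sect.~\ref{res},
$$
\int_{B_{R\lambda_n}(z_n)}q_+(u_n)d\mathit{Vol}_\Sigma=\int_{B_R(0)}q_+(\tilde u_n)dy,
$$
and the preceding step, applied to $\tilde u_n$, makes the right-hand side converge as $n\to\infty$ and then $R\to\infty$ to $Q_+(\phi)$ plus the contributions of further bubbles branching off $\phi$. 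Iterating the rescaling (or invoking the bubble tree construction from Sect.~\ref{proofMT}) exhausts every bubble attached to $x_i$. The contributions of the neck regions between successive scales vanish in the limit: on any annular neck $A_n\subset\Sigma$ the bounds~\eqref{CS1}--\eqref{CS2} give
$$
\int_{A_n}q_+(u_n)d\mathit{Vol}_\Sigma\leqslant\sqrt{2}\max\abs{\Omega}\int_{A_n}e(u_n)d\mathit{Vol}_\Sigma,
$$
and the right-hand side tends to zero by the energy identity built into the bubble convergence theorem. Combining everything gives $q_i=\sum_\phi Q_+(\phi)$, and since each bubble $\phi$ is non-constant and cannot be simultaneously holomorphic and anti-holomorphic, Corollary~\ref{cor:qbounds} forces $Q_+(\phi)\geqslant 2\pi$.

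\textbf{Main obstacle.} The technically demanding point is the bookkeeping of the bubble tree in the third paragraph: arranging the rescalings so that every bubble is captured exactly once and that the curvature contributions assigned to the different levels of the tree indeed sum to $q_i$. By contrast, the no-curvature-loss statement is essentially free, being a pointwise consequence of the corresponding no-energy-loss statement.
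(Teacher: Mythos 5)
Your proposal is correct, and for the core of the theorem it follows the paper's own route: the Parker bubble tree construction at each $x_i$, the conformal invariance of $Q_+$, the pointwise comparison~\eqref{CS1}--\eqref{CS2} to reduce the vanishing of the neck contributions to the no-energy-loss-in-necks statement, and Corollary~\ref{cor:qbounds} (a non-constant harmonic sphere cannot be both holomorphic and anti-holomorphic) for the bound $Q_+(\phi)\geqslant 2\pi$. The one structural difference is that you obtain the positivity of the atoms only \emph{a posteriori}, from the completed identification $q_i=\sum_\phi Q_+(\phi)$, whereas the paper first proves directly, independently of the bubble tree, that every bubble point carries curvature mass at least $\pi/2$. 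That argument (Sect.~\ref{proofMT0}) combines the Bochner identity $\frac{1}{4}\Delta\log e'(u)=-\alpha'(u)+q'(u)-\frac{1}{2}K_\Sigma$ with a Reshetnyak--Troyanov type lemma ($L^p$ control of $e^\varphi$ in terms of its $L^1$ norm when $\int(\Delta\varphi)^+<4\pi$) and elliptic estimates, showing that if the local mass of $q_+(u_n)$ near a point stays below $\pi/2$ then $e(u_n)$ is uniformly bounded there and no bubbling can occur. This is an $\varepsilon$-regularity statement with curvature playing the role of energy; it is of independent interest but is not logically required for the theorem as stated, so your shorter route does close all the claims. Two minor points: the constant in your neck estimate should be $2\sqrt{2}\max\abs{\Omega}$ rather than $\sqrt{2}\max\abs{\Omega}$, since $q_+=q'_++q''_+$; and in your second paragraph the passage from pointwise convergence of $q_+(u_n)$ off the finitely many zeroes of $e'(u)$, $e''(u)$ to weak-$*$ convergence of the measures does require the uniform bound~\eqref{CS1}--\eqref{CS2} to exclude atoms forming at those zeroes, exactly as you indicate, and is worth writing out explicitly.
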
 

The theorem says that at a given point $x\in\Sigma$ the energy blows up if and only if there is a "sufficient amount" of positive curvature near the images $u_n(x)$. Moreover, each non-trivial bubble sphere $\phi$ consumes a non-trivial (universally estimated!) amount $Q_+(\phi)$ of positive curvature off the sequence $u_n$. To illustrate the result consider the case when all harmonic maps $u_n:\Sigma\to M$ in the bubble convergence theorem are holomorphic. Then the curvature density $q_+(u_n)$ is given by the explicit formula
$$
q_+(u_n)=\frac{1}{2}H_+(\partial u_n(v),\overline{\partial u_n(v)})\cdot e(u_n),
$$
see Sect.~\ref{prem}, where $H_+(\cdot,\cdot)$ stands for the non-negative part of the holomorphic sectional curvature, and $v$ is a unit vector in $T^{1,0}\Sigma$. If the holomorphic sectional curvature changes sign or has vanishing directions, then the term $H_+(\cdot,\cdot)$ above can, in principle, vanish or approach zero as $n\to+\infty$. However, by Theorem~\ref{MT} it does not affect the blow up of the right-hand side  -- the curvature density blows up if and only if the energy density $e(u_n)$ does. Further, if $H$ is an upper bound for the holomorphic sectional curvature of $M$, then for a holomorphic map $u$ we have the following version of inequality~\eqref{CS1}:
$$
q_+(u)\leqslant \frac{H}{2}\cdot e(u).
$$
In particular, if $H\geqslant 0$, then the energy of a holomorphic (or anti-holomorphic) sphere $u$ satisfies the bound
$$
E(u)\geqslant 4\pi/H.
$$
The right-hand side here is also the lower bound for the energy concentration masses $m_i$ at a bubble point $x_i$  whose bubble tree decomposition contains a bubble which is either holomorphic or anti-holomorphic.

We end the discussion with a couple of examples.
\begin{example}
Let $M$ be a $2$-dimensional sphere endowed with an arbitrary Riemannian metric. Denote by $K_M$ its Gauss curvature, and by $K_M^+$ the non-negative part $\max\{ K_M,0\}$. In this case the curvature densities $q'(u)$ and $q''(u)$ are given by explicit formulas, see Sect.~\ref{prem}. Their analysis shows that the energy estimate~\eqref{sphereMB} for a bubble sphere can be improved to
$$
E(u)\geqslant 4\pi\left(\max K_M^+\right)^{-1}.
$$
Besides, the blow-ups of the curvature densities $q'_+(u)$ and $q''_+(u)$ are responsible precisely for the occurrence of holomorphic and anti-holomorphic bubbles respectively.
\end{example}
\begin{example}
Let $M$ be a complex projective space $\mathbf{CP}^n$ with constant holomorphic sectional curvature $c$.
The formulas for the curvature densities in Sect.~\ref{prem} show that for any (not necessarily holomorphic or anti-holomorphic) harmonic non-trivial sphere its energy is bounded below by $4\pi/c$. As in the example above, the blow-up of the quantity $q'_+(u)$ (or $q''_+(u)$) implies the blow-up of the corresponding part of the energy $e'(u)$ (or $e''(u)$ respectively). Besides, if for a given bubble point $x_i$ every bubble $\phi$ in its bubble tree decomposition is such that
$$
d\phi(T_xS^2)\text{ is contained in a }1\text{-dimensional complex subspace of }T_{u(x)}\mathbf{CP}^n
$$
for every $x\in S^2$, then the converse holds; that is, the quanity $q'_+(u)$ blows up if and only if so does $e'(u)$.
\end{example}

\section{Harmonic maps into K\"ahler manifolds}
\label{prem}
\subsection{Preliminaries}
Let $(\Sigma, g)$ and $(M, h)$ be a Riemann surface and a K\"ahler manifold respectively.
In local complex coordinates the Hermitian metrics $g$ and $h$ have the forms
$$
g=g\cdot dzd\bar z\quad\text{and}\quad h=h_{\alpha\bar\beta}du^\alpha du^{\bar\beta}.
$$
For a smooth map $u:\Sigma\to M$ the differentials $\partial u$ and $\bar\partial u$ in local coordinates correspond to the matrices $(u^\alpha_z)$ and $(u^\alpha_{\bar z})$. In particular, the holomorphic and anti-holomorphic parts of the energy $e'(u)$ and $e''(u)$ in this notation are given by the formulas
\begin{equation}
\label{energies}
e'(u)=\frac{1}{g}h_{\alpha\bar\beta}u^\alpha_zu^{\bar\beta}_{\bar z}\quad\text{and}
\quad e''(u)=\frac{1}{g}h_{\alpha\bar\beta}u^\alpha_{\bar z}u^{\bar\beta}_{z}.
\end{equation}
The relations in~\eqref{erels} follow now by straightforward calculation; the details can be found in~\cite{EL83}. 

The {\em harmonic map equation}, that is the Euler-Lagrange equation for the energy $E(u)$, has the form
\begin{equation}
\label{hme}
u^\beta_{z\bar z}+\Theta^\beta_{\alpha\gamma}u^\alpha_zu^\gamma_{\bar z}=0,
\end{equation}
where $(\Theta^\beta_{\alpha\gamma})$ stand for the connection form $\Theta=\Theta^\beta_{\alpha\gamma}du^\gamma$ corresponding to the Hermitian connection on $T^{1,0}M$, see~\cite{EL83}. If $(\Omega^\alpha_\beta)$ is the curvature form matrix
$$
\Omega^\alpha_\beta=K^\beta_{\alpha\gamma\bar\delta}du^\gamma\wedge du^{\bar\delta},
\quad\text{where }K^\beta_{\alpha\gamma\bar\delta}=\frac{\partial~}{\partial z^{\bar\delta}}
(\Theta^\beta_{\alpha\gamma}),
$$
then the curvature densities, introduced in Sect.~\ref{res}, can be written as
\begin{equation}
\label{q'}
-q'(u)e'(u)=\frac{1}{g^2}K_{\alpha\bar\beta\gamma\bar\delta}(u^\gamma_zu^{\bar\delta}_{\bar z}-u^\gamma_{\bar z}u^{\bar\delta}_z)u^\alpha_zu^{\bar\beta}_{\bar z},
\end{equation}
\begin{equation}
\label{q''}
q''(u)e''(u)=\frac{1}{g^2}K_{\alpha\bar\beta\gamma\bar\delta}(u^\gamma_zu^{\bar\delta}_{\bar z}-u^\gamma_{\bar z}u^{\bar\delta}_z)u^\alpha_{\bar z}u^{\bar\beta}_{z}.
\end{equation}
As was observed by Wood~\cite{Wood}, for a harmonic map $u:\Sigma\to M$ the singularities of $q'(u)$ and $q''(u)$ at the zeroes of $e'(u)$ and $e''(u)$ respectively are removable. This follows from the fact that the vector bundle $u^*T^{1,0}M$ admits a holomorphic structure such that $\partial u$ is its holomorphic section, see~\cite{KM}. Hence, near an isolated zero it can be written in the form $z^r\psi(z)$, where $\psi(z)$ is a non-vanishing section, and the claim follows from relation~\eqref{q'}. The statement for $q''(u)$ is obtained in a similar fashion.

We proceed with the examples when the curvature expressions on the right-hand sides in~\eqref{q'} -\eqref{q''} can be easily analysed. First, for a holomorphic map $u$ we have the following explicit formula 
\begin{equation}
\label{holq}
q'(u)=\frac{1}{2}H(\partial u(v),\overline{\partial u(v)})e'(u),
\end{equation}
where $H(\cdot,\cdot)$ denotes the holomorphic sectional curvature,
 $$
\frac{1}{2}H(X,\bar X)=-\langle K(X, \bar X) X, \bar X\rangle/\abs{X}^4,
$$
and $v$ is a unit vector in $T^{1,0}\Sigma$. An analogous relation for $q''(u)$ holds also for anti-holomorphic maps. When $M$ has complex dimension one, the curvature components $K_{\alpha\bar\beta\gamma\bar\delta}$ reduce to the one non-trivial value $(-1/2)K_M$, where $K_M$ is the Gauss curvature. In this case the quantities $q'(u)$ and $q''(u)$ are given by the following formulas:
$$
q'(u)=\frac{1}{2}K_M(e'(u)-e''(u)),
$$
$$
q''(u)=-\frac{1}{2}K_M(e'(u)-e''(u)).
$$
Further, if the holomorphic sectional curvature of $M$ is constant and is equal to $c$, then
$$
K_{\alpha\bar\beta\gamma\bar\delta}=-\frac{c}{4}(h_{\alpha\bar\beta}h_{\gamma\bar\delta}+
h_{\alpha\bar\delta}h_{\gamma\bar\beta}),
$$
and $q'(u)$ and $q''(u)$ have the form
$$
q'(u)=\frac{c}{2}\left((e'(u)-e''(u))+\sigma(u)e''(u)\right),
$$
$$
q''(u)=-\frac{c}{2}\left((e'(u)-e''(u))-\sigma(u)e'(u)\right),
$$
where $0\leqslant\sigma(u)\leqslant 1/2$. Moreover, as is shown in~\cite{To,Wood}, the value
$\sigma(u)(x)$ vanishes if and only if the image  of $du(x)$ is contained in a complex one-dimensional subspace of $T_{u(x)}M$. 

We proceed with the Bochner-type formulas for $\Delta e'(u)$ and $\Delta e''(u)$ due to~\cite{To, Wood}. First, we introduce more notation. By the Laplacian $\Delta$ we mean below the non-negative operator on $\Sigma$, which in local coordinates has the form
$$
\Delta=-\frac{4}{g}\frac{\partial^2~}{\partial z\partial\bar z}.
$$
For a map $u:\Sigma\to M$ by $\beta'(u)$ and $\beta''(u)$ we denote the $(1,0)$- and $(0,1)$-parts of the derivatives of $\partial u$ and $\bar\partial u$ in the natural connections on the bundles
$\Hom(T^{1,0}\Sigma, T^{1,0}M)$ and $\Hom(T^{0,1}\Sigma, T^{1,0}M)$ respectively. The proofs of the following statements can be found in~\cite{To,Wood}.
\begin{prop}
\label{to}
Let $\Sigma$ and $M$ be a Riemannian surface and a K\"ahler manifold respectively. Then 
for any harmonic map $u:\Sigma\to M$ we have the following relations
$$
\frac{1}{4}\Delta e'(u)=-\abs{\beta'(u)}^2+q'(u)e'(u)-\frac{1}{2}K_\Sigma e'(u),
$$
$$
\frac{1}{4}\Delta e''(u)=-\abs{\beta''(u)}^2+q''(u)e'(u)-\frac{1}{2}K_\Sigma e''(u),
$$
where $K_\Sigma$ is the Gauss curvature of $\Sigma$.
\end{prop}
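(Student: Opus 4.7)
The plan is to recognize both identities as instances of the Bochner--Kodaira formula for the squared pointwise norm of a holomorphic section of a Hermitian holomorphic vector bundle. The key structural input, already used earlier in Section~\ref{prem} in the discussion of removable singularities, is the Koszul--Malgrange theorem: for a harmonic map $u\colon\Sigma\to M$ into a K\"ahler manifold, the pullback bundle $u^*T^{1,0}M$ carries a natural holomorphic structure with respect to which $\partial u$, viewed as a section of $E:=T^{*1,0}\Sigma\otimes u^*T^{1,0}M$, is holomorphic. Indeed, the harmonic map equation~\eqref{hme} is exactly the statement $\bar\partial(\partial u)=0$ in the induced $(0,1)$-connection on $E$. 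Thus $s:=\partial u$ is a holomorphic section of the Hermitian holomorphic bundle $E$, and the Chern connection on $E$ decomposes as $\nabla=\nabla'+\bar\partial$, with $\nabla' s=\beta'(u)$ by definition of $\beta'$.

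The next step is the standard Bochner--Kodaira identity for such a section over a Riemann surface:
$$
\partial\bar\partial\abs{s}^2=\langle\nabla' s,\nabla' s\rangle-\langle R^E s,s\rangle,
$$
where $R^E$ is the Chern curvature of $E$. I would convert this to the Laplacian via the pointwise identity $\Delta f\cdot\omega_\Sigma=-2i\,\partial\bar\partial f$ (using the convention on $\Delta$ adopted before Proposition~\ref{to}), which yields
$$
\tfrac14\Delta\abs{s}^2\,\omega_\Sigma=-\abs{\nabla's}^2\,\omega_\Sigma+\langle R^E s,s\rangle/(2i).
$$
So the proof reduces to identifying the right-hand side.

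The curvature $R^E$ of the tensor product splits as the sum of the induced curvatures of $T^{*1,0}\Sigma$ and $u^*T^{1,0}M$. The first factor is a line bundle over $\Sigma$ whose Chern curvature is $\tfrac12 K_\Sigma\,\omega_\Sigma$ (up to the standard sign), which upon contraction against $s\otimes\bar s$ produces the term $-\tfrac12 K_\Sigma\,e'(u)$. The second factor, contracted against $\partial u(v)\otimes\overline{\partial u(v)}$ for a unit vector $v\in T^{1,0}\Sigma$, is by the very definition of $q'(u)$ in Section~\ref{res} equal to $q'(u)\,e'(u)\,\omega_\Sigma$ (the minus sign in the defining relation for $q'$ cancels against the sign in front of $\langle R^E s,s\rangle$). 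Assembling these contributions gives the first asserted identity. The formula for $\Delta e''(u)$ follows by an entirely parallel argument: $\bar\partial u$ is a holomorphic section of $\overline{T^{*1,0}\Sigma}\otimes u^*T^{1,0}M$ when $M$ is replaced by its conjugate K\"ahler manifold $\bar M$, or equivalently one repeats the above computation with $\bar z$ and $z$ interchanged, so that $q''(u)$ replaces $q'(u)$ and $\beta''(u)$ replaces $\beta'(u)$.

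The principal obstacle is purely bookkeeping: tracking the various factor-of-two and sign conventions that enter when translating between the complex-geometric identity expressed in $\partial\bar\partial$ and Chern curvature and the real Laplacian expression in the proposition, especially since $q'(u)$ and $q''(u)$ are defined with opposite signs in the excerpt. A safer, though duller, alternative is to forgo the abstract identity altogether and compute in local coordinates: differentiate $e'(u)=(1/g)h_{\alpha\bar\beta}u^\alpha_z u^{\bar\beta}_{\bar z}$ twice, use~\eqref{hme} to eliminate the mixed second derivatives $u^\alpha_{z\bar z}$ and $u^{\bar\beta}_{\bar z z}$, and then group the surviving quadratic expressions in the first derivatives of $u$ into the three pieces $-\abs{\beta'(u)}^2$, $q'(u)e'(u)$ (recognised via~\eqref{q'}), and $-\tfrac12 K_\Sigma e'(u)$ (arising from differentiating the metric coefficient $1/g$). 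This direct route avoids any appeal to Koszul--Malgrange but requires carrying several Christoffel/curvature expansions through by hand.
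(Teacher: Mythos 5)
The paper does not prove Proposition~\ref{to} at all: it quotes the two identities from \cite{To,Wood} (``The proofs of the following statements can be found in~\cite{To,Wood}''), so there is no internal argument to compare against. Your sketch is, in substance, the proof given in those references: the Koszul--Malgrange holomorphic structure on $u^*T^{1,0}M$ (already invoked in Sect.~\ref{prem} via \cite{KM}) makes $\partial u$ a holomorphic section of $T^{*1,0}\Sigma\otimes u^*T^{1,0}M$ precisely because the harmonic map equation~\eqref{hme} is the vanishing of the $(0,1)$-covariant derivative of $(u^\alpha_z)$; the Bochner--Kodaira identity for that section then splits the curvature of the tensor product into the base contribution, giving $-\tfrac12 K_\Sigma e'(u)$, and the pulled-back target curvature, giving $q'(u)e'(u)$ via the defining relation of $q'$ and formula~\eqref{q'}, while $-\abs{\beta'(u)}^2$ is $-\abs{\nabla's}^2$. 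The conversion $\Delta f\,\omega_\Sigma=-2i\,\partial\bar\partial f$ is consistent with the convention $\Delta=-(4/g)\partial_z\partial_{\bar z}$ adopted in the paper, and you are right that the only delicate part is the sign and factor bookkeeping (the minus sign built into the definition of $q'$ is exactly what makes the target-curvature term enter with a plus). Your treatment of the second identity is also correct, provided one notes that \eqref{hme} makes $\bar\partial u$ anti-holomorphic rather than holomorphic, i.e.\ the roles of $z$ and $\bar z$ must indeed be interchanged as you say. One incidental remark: the second displayed formula in the statement contains a typo, $q''(u)e'(u)$ should read $q''(u)e''(u)$, and your parallel computation produces the corrected version.
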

\begin{corollary}
\label{log_to}
For a harmonic map $u:\Sigma\to M$ the following relations hold
$$
\frac{1}{4}\Delta\log e'(u)=-\alpha'(u)+q'(u)-\frac{1}{2}K_\Sigma,
$$
$$
\frac{1}{4}\Delta\log e''(u)=-\alpha''(u)+q''(u)-\frac{1}{2}K_\Sigma,
$$
where $\alpha'(u)$ and $\alpha''(u)$ are non-negative quantities. The relations are understood to hold at every point in $\Sigma$ where $e'(u)\ne 0$ or $e''(u)\ne 0$ respectively. 
\end{corollary}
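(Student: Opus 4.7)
The plan is to deduce the corollary from Proposition~\ref{to} by a logarithmic derivative computation. For any smooth positive function $f$ on $\Sigma$, expanding $-(4/g)\partial_z\partial_{\bar z}\log f$ in any local complex coordinate and using $\partial_z f=\overline{\partial_{\bar z}f}$ yields the identity
$$
\frac{1}{4}\Delta\log f=\frac{1}{4}\frac{\Delta f}{f}+\frac{\abs{\partial_z f}^2}{g\,f^2}.
$$
Applied to $f=e'(u)$ on the open set $\{e'(u)\ne 0\}$ and combined with the Bochner formula from Proposition~\ref{to}, this gives
$$
\frac{1}{4}\Delta\log e'(u)=q'(u)-\frac{1}{2}K_\Sigma-\alpha'(u),\qquad
\alpha'(u):=\frac{\abs{\beta'(u)}^2}{e'(u)}-\frac{\abs{\partial_z e'(u)}^2}{g\,\bigl(e'(u)\bigr)^2},
$$
and the same manipulation for $e''(u)$ produces the second equation with a candidate quantity $\alpha''(u)$. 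So the whole content of the corollary reduces to the non-negativity of $\alpha'(u)$ and $\alpha''(u)$.

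The key step is to check $\alpha'(u)\geqslant 0$, which is a Kato-type inequality. I would view $\partial u$ as a section $s$ of the Hermitian bundle $E=\Hom(T^{1,0}\Sigma,u^*T^{1,0}M)$ with its induced Chern connection $\nabla$, normalised so that $\abs{s}_E^2=e'(u)$ and $\abs{\nabla_z s}_E^2=g\,\abs{\beta'(u)}^2$. The crucial input, already invoked in the removable-singularity discussion preceding the corollary, is that the harmonic map equation endows $E$ with the Koszul--Malgrange holomorphic structure for which $s=\partial u$ is a \emph{holomorphic} section; in particular $\nabla^{0,1}s=0$. Compatibility of $\nabla$ with the Hermitian metric then gives $\partial_z\abs{s}_E^2=\langle\nabla_z s,s\rangle_E$, and Cauchy--Schwarz yields
$$
\abs{\partial_z e'(u)}^2\leqslant g\,\abs{\beta'(u)}^2\cdot e'(u),
$$
which is precisely $\alpha'(u)\geqslant 0$. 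The non-negativity of $\alpha''(u)$ follows by the parallel argument with $\bar\partial u$ regarded as a holomorphic section of $\Hom(T^{0,1}\Sigma,u^*T^{1,0}M)$ under the conjugate Koszul--Malgrange structure.

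The main obstacle is conceptual rather than computational: one needs the \emph{sharp} Kato inequality above, not the generic bound $\abs{\partial_z\abs{s}^2}\leqslant 2\abs{s}\abs{\nabla s}$, which would lose a crucial factor and destroy the sign. Locating and exploiting the Koszul--Malgrange holomorphicity of $\partial u$ and $\bar\partial u$ is therefore the only substantive ingredient; modulo this, the corollary is a routine rearrangement of Proposition~\ref{to}.
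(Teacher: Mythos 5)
Your argument is correct and is essentially the classical proof of this statement: the paper itself gives no proof but defers to Toledo and Wood, and their derivation is exactly your route --- rewrite $\tfrac14\Delta\log f$ via the logarithmic second-derivative identity, substitute the Bochner formula of Proposition~\ref{to}, and obtain the sign of $\alpha'(u)$, $\alpha''(u)$ from the sharp Kato inequality $\abs{\partial_z e'(u)}^2\leqslant g\abs{\beta'(u)}^2e'(u)$, which follows from $\nabla^{0,1}\partial u=0$ (the Koszul--Malgrange holomorphicity encoded in the harmonic map equation~\eqref{hme}) together with Cauchy--Schwarz. You correctly identified that the generic Kato bound would not suffice and that holomorphicity of $\partial u$ (and the parallel statement for $\bar\partial u$) is the one substantive ingredient.
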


Finally, note that if a map $u$ is holomorphic, then by~\eqref{holq} the first relation in Prop.~\ref{to} takes the form
\begin{equation}
\label{chern}
\frac{1}{4}\Delta e'(u)=-\abs{\beta'(u)}^2+\frac{1}{2}H(\partial u(v),\overline{\partial u(v)})(e'(u))^2-\frac{1}{2}K_\Sigma e'(u).
\end{equation}
This is a known Bochner-type identity for holomorphic maps due to~\cite{Ch,Lu} for which the K\"ahler hypothesis on the target $M$ is unnecessary. In particular, the first relation in Cor.~\ref{log_to}  holds for holomorphic maps into an arbitrary Hermitian manifold. 

\subsection{Proof of Theorem~\ref{qbounds}}
Suppose that $u$ is not anti-holomorphic. Then the differential $\partial u$ has only isolated zeroes $z_i\in\Sigma$; by $r'_i$ we denote their orders. Excising small disks $D_i$ centred at $z_i$, by Stokes' formula we obtain
$$
\int_{\Sigma\backslash\cup D_i}\Delta\log e'(u)\omega_\Sigma=
\sum_i\int_{\partial D_i}\frac{\partial}{\partial n}\log e'(u)ds.
$$
Parameterising the boundary $\partial D_i$ as $r\cdot e^{i\theta}$, where $\theta\in [0,2\pi]$, we compute the integrals on the right-hand side:
$$
\int_{\partial D_i}\frac{\partial}{\partial n}\log e'(u)ds=\int_0^{2\pi}\frac{\partial}{\partial r}\log
r^{2r'_i}rd\theta+O(r)=4\pi r'_i+O(r).
$$
Now integrating the first formula in Cor.~\ref{log_to} over $\Sigma\backslash\cup D_i$, and letting the radii of the $D_i$'s to zero, we obtain
$$
\int_\Sigma q'(u)\omega_\Sigma\geqslant\pi\sum_{z_i}r'_i+\frac{1}{2}\int_\Sigma K_\Sigma\omega_\Sigma.
$$
By the Gauss-Bonnet formula the last integral equals $4\pi(1-\varrho)$, and we finally obtain
$$
Q'_+(u)\geqslant\pi\left(\sum_{z_i}r'_i+2-2\varrho\right).
$$
In the case when a map $u$ is not holomorphic, the analogous bound for $Q''_+(u)$ is derived in a similar fashion.
\qed

\section{Proof of Theorem~\ref{MT}: curvature and energy concentration}
\label{proofMT0}
\subsection{Preliminaries: key lemma}
Let $D$ be a unit disk in $\mathbf C$, equipped with a Euclidean metric, and $dV$ be its Lebesgue measure. By $D_{1/2}$ we denote a disk centred at the origin of radius $1/2$. The following bound for the $L_p$-norm of a positive function in terms of its $L_1$-norm is an important ingredient in the proof of Theorem~\ref{MT}. It is essentially a consequence of Reshetnyak's analysis in~\cite{Re} combined with a version of Schwarz lemma for subharmonic functions. We state it in the form close to the one in~\cite{Tro}.
\begin{lemma}
\label{ML} 
Let $\varphi\in L^1(D)$ be a function such that $\Delta\varphi\in L^1(D)$. Suppose that
$$
\int_D(\Delta\varphi)^+dV\leqslant\kappa<4\pi,
$$
where $(\Delta\varphi)^+=\max\{\Delta\varphi,0\}$. Then for any $1\leqslant p<4\pi/\kappa$ there exists a constant $C(p,\kappa)$ such that
$$
\abs{e^\varphi}_{L^p(D_{1/2})}\leqslant C(p,\kappa)\cdot\abs{e^\varphi}_{L^1(D)}.
$$
\end{lemma}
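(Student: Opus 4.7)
The plan is to decompose $\varphi = u + v$, where $u$ absorbs the positive part of $\Delta\varphi$ and $v$ is classically subharmonic, and to treat each summand by a separate exponential estimate.

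I would let $u$ solve the Dirichlet problem $\Delta u=(\Delta\varphi)^+$ in $D$ with $u=0$ on $\partial D$. With the paper's sign convention $\Delta=-(\partial_x^2+\partial_y^2)$ the function $u$ is classically superharmonic, so the maximum principle gives $u\geqslant 0$. The standard exponential integrability estimate for the Newtonian potential of an $L^1$ datum with mass below $4\pi$, which is exactly the content of Reshetnyak's analysis cited in the statement (and which reappears in the Brezis--Merle / Moser--Trudinger inequality), then yields a uniform bound $\|e^{pu}\|_{L^1(D)}\leqslant C_1(p,\kappa)$ for every $p\in[1,4\pi/\kappa)$. Setting $v=\varphi-u$ one has $\Delta v=\Delta\varphi-(\Delta\varphi)^+=-(\Delta\varphi)^-\leqslant 0$, so $v$ is classically subharmonic; since $t\mapsto e^t$ is convex and increasing, $e^v$ is subharmonic as well.

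For the subharmonic piece I would invoke the submean value inequality, which is the Schwarz-type lemma alluded to in the statement: for any $x_0\in D_{1/2}$ the ball $B_{1/2}(x_0)$ lies in $D$, so
$$e^{v(x_0)}\leqslant\frac{4}{\pi}\int_D e^v\, dV.$$
Since $u\geqslant 0$ gives $v\leqslant\varphi$, one has $\int_D e^v\leqslant\|e^\varphi\|_{L^1(D)}$, and multiplying by $e^{u(x_0)}$ yields the pointwise comparison
$$e^{\varphi(x_0)}=e^{u(x_0)}e^{v(x_0)}\leqslant\frac{4}{\pi}\,e^{u(x_0)}\,\|e^\varphi\|_{L^1(D)}.$$
Raising to the $p$-th power and integrating over $D_{1/2}$ then produces
$$\|e^\varphi\|_{L^p(D_{1/2})}\leqslant\frac{4}{\pi}\,\|e^\varphi\|_{L^1(D)}\,\|e^{pu}\|_{L^1(D)}^{1/p},$$
and the last factor is controlled by $C_1(p,\kappa)^{1/p}$, giving the desired constant $C(p,\kappa)=(4/\pi)C_1(p,\kappa)^{1/p}$.

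The main obstacle will be making these pointwise manipulations rigorous when $\varphi$ is only assumed to lie in $L^1(D)$ with $\Delta\varphi\in L^1(D)$. The splitting $\varphi=u+v$ exists a priori only in the distributional sense, and the submean value inequality for $e^v$ requires that $v$ coincide almost everywhere with its upper semi-continuous subharmonic representative; this is where the Reshetnyak-type regularity theory enters, since any distributional solution of $\Delta v\leqslant 0$ lying in $L^1$ admits such a representative. A safe alternative is to prove the estimate first for smooth $\varphi$, with constants depending only on $(p,\kappa)$, and then pass to the limit by mollification. In either approach the strict inequality $\kappa<4\pi$ is indispensable, as it is the sharp threshold for the exponential integrability step.
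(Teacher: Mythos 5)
Your proof is correct and follows essentially the same route as the paper: both split $\varphi$ into a potential of $(\Delta\varphi)^+$ plus a subharmonic remainder, then combine a pointwise (submean value / Schwarz lemma) bound on the exponential of the subharmonic part with an $L^p$ bound on the exponential of the potential, the latter being the Reshetnyak/Brezis--Merle estimate that the paper reproves via Jensen's inequality and the Riesz formula. The only cosmetic differences are that you use the Green potential where the paper uses the logarithmic potential (so your sign argument $u\geqslant 0$ replaces the paper's lower bound $v\geqslant -\tfrac{\kappa}{2\pi}\log 2$), and you rightly flag the mollification step needed to justify the pointwise manipulations under the stated $L^1$ hypotheses.
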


\noindent
Below we outline the proof of the lemma. First, recall that for any compactly supported measure $\mu$ on $\mathbf C$ a function $v$ is called its {\em logarithmic potential}, if it solves 
$\Delta v=\mu$ among locally integrable functions and
$$
(2\pi v(z)+\mu(\mathbf C)\log\abs{z})\to 0\quad\text{as}\quad\abs{z}\to\infty.
$$
Such a potential is unique, and is given by the Riesz formula
\begin{equation}
\label{Riesz}
v(z)=-\frac{1}{2\pi}\int_{\mathbf C}\log\abs{z-\zeta}d\mu(\zeta).
\end{equation}
The following $L_p$-estimate on $e^v$ is a version of a statement in~\cite[p.~251]{Re} (Theorem~3.1). The $L_p$-norm below is understood in the sense of the Lebesgue measure.
\begin{prop}
\label{p1}
Let $\mu$ be a compactly supported measure and $v$ be its logarithmic potential. Then for any $p\geqslant 1$ such that $0<\mu(D)<4\pi/p$ we have
$$
\abs{e^v}_{L^p(D)}\leqslant\left(\frac{2\pi}{\delta+2}2^{\delta+2}\right)^{1/p}
$$
where $\delta=(-p\mu(D)/2\pi)$.
\end{prop}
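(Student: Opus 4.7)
The plan is to derive the estimate directly from the Riesz representation~\eqref{Riesz} by a Jensen plus Fubini argument in the spirit of Reshetnyak~\cite{Re}. Denote the total mass $m=\mu(\mathbf{C})$; without loss of generality one may assume $\mu$ is supported in $\bar D$, so that $m=\mu(D)$ (the piece of $\mu$ outside $D$ contributes a smooth, hence bounded, term to $v$ on $D$ and can be absorbed). Introduce the probability measure $\nu=\mu/m$ and set $a=pm/(2\pi)$; the hypothesis $0<\mu(D)<4\pi/p$ becomes exactly $0<a<2$, and $\delta=-a$ in the notation of the statement.

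The first step is a pointwise upper bound on $e^{pv(z)}$. From~\eqref{Riesz} one rewrites
$$pv(z)=-\frac{pm}{2\pi}\int\log|z-\zeta|\,d\nu(\zeta)=\int\log|z-\zeta|^{-a}\,d\nu(\zeta),$$
and, since $\exp$ is convex and $\nu$ is a probability measure, Jensen's inequality immediately yields
$$e^{pv(z)}\leqslant\int|z-\zeta|^{-a}\,d\nu(\zeta).$$
The second step is to integrate this over $D$ and invoke Fubini's theorem, which reduces the problem to the uniform estimate of $\int_D|z-\zeta|^{-a}\,dV(z)$ in the parameter $\zeta\in\bar D$. Enlarging $D$ to the disk $B(\zeta,2)$, which contains $D$ whenever $\zeta\in\bar D$, and switching to polar coordinates gives
$$\int_D|z-\zeta|^{-a}\,dV(z)\leqslant 2\pi\int_0^2 r^{1-a}\,dr=\frac{2\pi}{2-a}\cdot 2^{2-a}=\frac{2\pi}{\delta+2}\cdot 2^{\delta+2}.$$
This bound is independent of $\zeta$, so averaging against $\nu$ and taking $p$-th roots delivers the claim.

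The only delicate point is ensuring that the Jensen step and the Fubini interchange are legitimate, and that the radial integral converges; all three ultimately hinge on the strict inequality $a<2$, which is precisely the hypothesis $\mu(D)<4\pi/p$ in the statement. I do not anticipate a serious obstacle: the argument is essentially soft. Some minor care is needed to justify reducing to a measure supported in $\bar D$, and to verify that the logarithmic potential $v$ is finite almost everywhere on $D$ so that both sides of the Jensen inequality are meaningful; this follows from the local integrability of $\log|z-\zeta|$ in the plane.
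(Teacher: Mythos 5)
Your argument is correct and is essentially the paper's own proof: the same Jensen-plus-Fubini scheme applied to the Riesz representation, with the same normalisation (your probability measure $\nu=\mu/m$ is the paper's factor $1/\mu(D)$) and the same polar-coordinate bound yielding the constant $\bigl(2\pi\, 2^{\delta+2}/(\delta+2)\bigr)^{1/p}$. The only additions are your explicit remarks on reducing to a measure supported in $\bar D$ and on the a.e.\ finiteness of $v$, which the paper leaves implicit.
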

\begin{proof}
By Jensen's inequality~\cite{LL}, we have
$$
\exp\left(\frac{1}{\mu(D)}\int_D\log\abs{z-\zeta}^\delta d\mu(\zeta)\right)\leqslant
\frac{1}{\mu(D)}\int_D\abs{z-\zeta}^\delta d\mu(\zeta).
$$
By the Riesz formula, the left-hand side here is precisely $e^{pv}$, and Fubini's theorem yields
$$
\abs{e^v}^p_{L^p(D)}\leqslant\frac{1}{\mu(D)}\int_D\int_D\abs{z-\zeta}^\delta dV(z)d\mu(\zeta),
$$
where the interior integral is bounded by $(2\pi\cdot 2^{\delta+2})/(\delta+2)$.
\end{proof}

For a proof of Lemma~\ref{ML} we also need the following version of Schwarz lemma; its proof can be found in~\cite{Tro}.
\begin{prop}
\label{p2}
Let $w$ be a subharmonic function on the unit disk $D$, $\Delta w\leqslant 0$. Then for any $z\in D$ we have
$$
e^{w(z)}\leqslant\frac{1}{\pi(1-\abs{z}^2)^2}\int_D e^wdV.
$$
\end{prop}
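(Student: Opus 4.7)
The plan is to exploit the conformal self-action of Möbius automorphisms of $D$, reducing the bound at a general point $z_0$ to the ordinary sub-mean-value inequality at the origin. For fixed $z_0\in D$, introduce the Möbius automorphism $\phi(\zeta)=(\zeta+z_0)/(1+\bar z_0\zeta)$ of $D$ with $\phi(0)=z_0$, and the auxiliary function
$$
\tilde w(\zeta)=w(\phi(\zeta))+\log\abs{\phi'(\zeta)}^2.
$$
The first summand is subharmonic as the pullback of the subharmonic $w$ by the holomorphic map $\phi$; the second is harmonic, since $\phi'$ is non-vanishing holomorphic on $D$ and $\log\abs{\phi'}^2=2\operatorname{Re}\log\phi'$. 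Therefore $\tilde w$ is subharmonic on $D$, and then $e^{\tilde w}$ is also subharmonic, by the standard fact that the composition of a subharmonic function with a convex non-decreasing one is subharmonic.

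I would then apply the ordinary sub-mean-value inequality to the non-negative subharmonic function $e^{\tilde w}$ at $\zeta=0$, taking the radius to the limit $r\to 1^-$,
$$
e^{\tilde w(0)}\leqslant \frac{1}{\pi}\int_D e^{\tilde w}\,dV,
$$
and evaluate the two sides separately. Since $\phi'(0)=1-\abs{z_0}^2$, the left-hand side equals $(1-\abs{z_0}^2)^2 e^{w(z_0)}$. For the right-hand side, the real Jacobian of the holomorphic map $\phi$ is $\abs{\phi'}^2$ and $\phi$ is a bijection of $D$, so the change of variables yields
$$
\int_D e^{\tilde w(\zeta)}\,dV(\zeta)=\int_D e^{w(\phi(\zeta))}\abs{\phi'(\zeta)}^2\,dV(\zeta)=\int_{\phi(D)}e^w\,dV=\int_D e^w\,dV.
$$
Combining the two gives $(1-\abs{z_0}^2)^2 e^{w(z_0)}\leqslant \pi^{-1}\int_D e^w\,dV$, which is precisely the desired bound.

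The reason the sharp factor $1/(\pi(1-\abs{z_0}^2)^2)$ arises — rather than the weaker $1/(\pi(1-\abs{z_0})^2)$ which would follow from applying the sub-mean-value inequality on the largest inscribed disk $D_{1-\abs{z_0}}(z_0)\subset D$ — is that the Jacobian correction $\log\abs{\phi'}^2$ built into $\tilde w$ exactly cancels the area distortion of $\phi$ in $\int_D e^{\tilde w}\,dV$, making this integral Möbius-invariant. I do not foresee any substantial obstacle; the only minor points of care are the subharmonicity of $e^{\tilde w}$ (handled above) and the limit argument $r\to 1^-$ in the sub-mean-value inequality, the latter being justified by monotone convergence since $e^{\tilde w}\geqslant 0$.
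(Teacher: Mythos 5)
Your argument is correct and complete. Note that the paper itself does not prove Proposition~\ref{p2}: it only cites Troyanov~\cite{Tro} for the proof, so there is no in-paper argument to compare against; your Möbius-invariance proof is the standard one and is exactly the kind of argument the citation points to. The key identities all check out: $\phi'(\zeta)=(1-\abs{z_0}^2)/(1+\bar z_0\zeta)^2$ is non-vanishing on the simply connected $D$, so $\log\abs{\phi'}^2$ is harmonic and $\tilde w$ is subharmonic; $\phi'(0)=1-\abs{z_0}^2$ gives the left-hand side; and the Jacobian $\abs{\phi'}^2$ makes $\int_D e^{\tilde w}\,dV$ equal to $\int_D e^w\,dV$, which is precisely why the sharp factor $(1-\abs{z_0}^2)^{-2}$ appears. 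The only point worth making explicit is the regularity convention: in the application (Lemma~\ref{ML}) one only knows $w=\varphi-v\in L^1$ with distributional $\Delta w\leqslant 0$, so the pointwise inequality must be read for the upper semicontinuous (subharmonic) representative of $w$, for which the sub-mean-value inequality you invoke at $\zeta=0$ is valid; with that understood, your limit $r\to 1^-$ via monotone convergence of $\int_{D_r}e^{\tilde w}\,dV$ is fine even when the right-hand side is infinite.
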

\begin{proof}[Proof of Lemma~\ref{ML}.]
Define a measure $\mu$ on a unit disk by setting $d\mu$ to be equal to $(\Delta\varphi)^+dV$.
Denote by $v$ its logarithmic potential and by $w$ the difference $(\varphi-v)$. Clearly, the latter is subharmonic:
$$
\Delta w=-(\Delta\varphi)^-\leqslant 0,
$$
where $(\Delta\varphi)^-=\max\{-\Delta\varphi,0\}$. By Riesz formula~\eqref{Riesz}, we have
$$
v(z)\geqslant-\frac{\mu(D)}{2\pi}\log(1+\abs{z})\geqslant-\frac{\kappa}{2\pi}\log 2,
$$
and hence
$$
\int_De^wdV\leqslant 2^{\kappa/2\pi}\int_De^\varphi dV.
$$
Applying Prop.~\ref{p2} to $w$, we conclude that
$$
e^{w(z)}\leqslant C_1(\kappa)\int_De^\varphi dV
$$
for any $z\in D_{1/2}$. On the other hand, by Prop.~\ref{p1}, we have
$$
\abs{e^v}_{L^p(D)}\leqslant C_2(p,\kappa).
$$
Now the claim follows by the combination of the last two inequalities:
$$
\abs{e^\varphi}_{L^p(D_{1/2})}\leqslant\sup_{D_{1/2}}e^w\cdot \abs{e^v}_{L^p(D)}\leqslant
C(p,\kappa)\abs{e^\varphi}_{L^1(D)}.
$$
\end{proof}
\subsection{Controlling the energy via $Q_+(u)$}
We proceed with the proof of Theorem~\ref{MT}. Here we show that the discrete part of the limit measure $dQ_+$ is supported precisely at the set of bubble points $\{x_1, \ldots, x_k\}$. Let $x$ be a point that belongs to the support of the discrete part of $dQ_+$, that is $dQ_+(x)>0$. Then by inequalities~\eqref{CS1}-\eqref{CS2}, it is a bubble point. Now we prove the converse -- for every bubble point $x_i$ the mass $dQ_+(x_i)$ is positive and, moreover, is at least $\pi/2$.

Suppose the contrary: there exists a bubble point $x_i$ such that $dQ_+(x_i)<\pi/2$. Since the point $x_i$ is isolated in the discrete part of $dQ_+$, there exists an open disk $D\subset\Sigma$, centred at $x_i$, such that
$$
\int_Dq_+(u_n)d\mathit{Vol}_\Sigma+\frac{1}{2}\int_D\abs{K_\Sigma}d\mathit{Vol}_\Sigma
\leqslant\frac{\kappa}{4}<\frac{\pi}{2}
$$
for a sufficiently large $n$. Combining this with the relations in Cor.~\ref{log_to}, we conclude that
$$
\int_D\left(\Delta\log e'(u_n)\right)^+\mathit{dVol}_\Sigma\leqslant\kappa<2\pi,
$$
$$
\int_D\left(\Delta\log e''(u_n)\right)^+\mathit{dVol}_\Sigma\leqslant\kappa<2\pi.
$$
Assuming that the metric on $D$ is conformally Euclidean, we now apply Lemma~\ref{ML}, to conclude that
\begin{equation}
\label{Lp}
\abs{e(u_n)}_{L^p(D_{1/2})}\leqslant\abs{e'(u_n)}_{L^p(D_{1/2})}+\abs{e''(u_n)}_{L^p(D_{1/2})}
\leqslant 2C(p,\kappa)E(u_n),
\end{equation}
where $1\leqslant p<4\pi/\kappa$. In other words, since the energies are bounded, so are the $L^p(D_{1/2})$-norms of the $e(u_n)$'s.

For a proof of the claim it is sufficient to show that the sequence $e(u_n)$ is bounded in $W^{2, r}(D_{1/8})$ for some $r>1$. Indeed, embedding the latter space into $C^0(\bar D_{1/8})$, see~\cite{GT}, we conclude that the $e(u_n)$'s are uniformly bounded in $D_{1/8}$ what contradicts the supposition that $x_i$ is a bubble point.

We prove the boundedness in $W^{2,r}(D_{1/8})$ separately for the sequences $e'(u_n)$ and $e''(u_n)$.
First, we show that
\begin{equation}
\label{aux}
\abs{\Delta e'(u)}_{L^r(D_{1/4})}\leqslant C_1\abs{e(u_n)}_{L^{2r}(D_{1/2})}+C_2
\end{equation}
for some constants $C_1$ and $C_2$. By the first relation in Prop.~\ref{to}, for  this it is sufficient to get the corresponding estimates on the quantities $\abs{\beta'(u_n)}^2$ and $q'(u_n)e'(u_n)$. 
The estimate for the $L_r$-norm of the second quantity is a straightforward consequence of inequalities~\eqref{CS1}-\eqref{CS2}. The estimate for $\abs{\beta'(u_n)}^2$ follows from the form of the harmonic map equation together with Schauder estimates in Sobolev spaces. In more detail, embedding $M$ isometrically into a Euclidean space and using the extrinsic form of the harmonic map equation~\cite{Sch}, we conclude that
$$
\abs{\Delta u^i_n}_{L^r(D_{1/2})}\leqslant C\cdot\abs{e(u_n)}_{L^r(D_{1/2})},
$$
where $(u_n^i)$ are the coordinates of the map $u_n$ viewed as a map into the Euclidean space. Further, by Schauder estimates~\cite[Th.~9.11]{GT} we obtain
$$
\abs{u^i_n}_{W^{2,r}(D_{1/4})}\leqslant C_3\abs{\Delta u^i_n}_{L^r(D_{1/2})}+C_4,
$$
where $C_4$ depends on the embedding of $M$. The combination of these inequalities yields the $L^r$-bound on the second derivatives of the $u_n$'s, and hence the $L^{2r}$-bound on 
$\abs{\beta'(u_n)}^2$.

Now we set $p$ in inequality~\eqref{Lp} to be $2r$. Since $\kappa<2\pi$, choosing $p$ sufficiently close to $4\pi/\kappa$, we can suppose that $r>1$. Under the hypotheses of the theorem the energies $E(u_n)$ are bounded, and the combination of inequalities~\eqref{Lp} and~\eqref{aux} implies that so is the sequence $\Delta e'(u_n)$ in the space $L^r(D_{1/4})$.  Using Schauder estimates ~\cite[Th.~9.11]{GT} again, we obtain
$$
\abs{e'(u_n)}_{W^{2,r}(D_{1/8})}\leqslant C_5\abs{\Delta e'(u_n)}_{L^r(D_{1/4})}+
C_6\abs{e'(u_n)}_{L^r(D_{1/4})} .
$$
Thus, the sequence $e'(u_n)$ is indeed bounded in $W^{2,r}(B_{1/8})$ as was claimed. The boundedness of the sequence $e''(u_n)$ can be demonstrated in a similar fashion.
\qed

\section{Proof of Theorem~\ref{MT}: bubble tree decomposition}
\label{proofMT}
\subsection{Background material on the bubble tree construction}
We start with recalling the bubble tree construction for sequences of harmonic maps. It is used in sequel for the proof of the curvature identity for the limit measure $dQ_+$ in Theorem~\ref{MT}. We follow closely the exposition in the paper~\cite{Pa96} by Parker, where further details and references on the subject can be found.

The construction is based on the following, by now standard facts, proved in~\cite{SaU}.
\begin{prop}
\label{auxSaU}
For a Riemannian manifold $(M,h)$ there exist positive constants $C_*$ and $\varepsilon_*$
such that
\begin{itemize}
\item[(i)] If $u:D_{2r}\to M$ is a harmonic map from a Euclidean disk of radius $2r$ and its energy $E_{2r}(u)=\int_{D_{2r}}e(u)$ is not greater than $\varepsilon_*$, then 
$$
\sup_{D_r}\abs{du}^2\leqslant C_*r^{-2}E_{2r}(u).
$$
\item[(ii)] If $u_n:D_{2r}\to M$ is a sequence of harmonic maps with $E_{2r}(u_n)<\varepsilon_*$ for all $n$, then it contains a subsequence that converges in $C^1$.
\item[(iii)] Any non-trivial harmonic map $u:S^2\to M$ has energy $E(u)\geqslant\varepsilon_*$.
\item[(iv)] Any smooth finite-energy harmonic map from the punctured disk $D\backslash\{0\}$ to $M$ extends to a smooth harmonic map on $D$.
\end{itemize}
\end{prop}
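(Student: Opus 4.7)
The plan is to recognise (i) as the fundamental $\varepsilon$-regularity estimate and to derive (ii)--(iv) from it via standard elliptic and conformal arguments, following the scheme of Sacks--Uhlenbeck. The key analytic input for (i) is a Bochner-type inequality for the energy density of a harmonic map.

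For (i), I would start from the fact that for a harmonic map $u$ into $M$ one has a pointwise inequality of the form $\Delta e(u)\leqslant K\,e(u)\,(1+e(u))$ with $K$ depending only on the sectional curvature of $M$ and on bounds for the domain metric; this is most cleanly derived after an isometric embedding $M\hookrightarrow\mathbf R^N$ and using the extrinsic harmonic map equation $\Delta u=A(u)(du,du)$. On a Euclidean disk $D_{2r}$ this is the input for a Moser iteration: a non-negative $f$ with $\Delta f\leqslant K f g$ and $\|g\|_{L^1(D_{2r})}\leqslant\varepsilon_*$ sufficiently small satisfies $\sup_{D_r}f\leqslant C r^{-2}\int_{D_{2r}}f$. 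Applied to $f=e(u)$ with $g=1+e(u)$, this yields the desired sup bound. One has to choose $\varepsilon_*$ small enough that the iteration closes, independently of $r$, which succeeds by the scaling properties of the estimate.

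Part (ii) follows by combining (i) with interior Schauder estimates for the harmonic map system: the $L^\infty$-bound on $|du|$ together with the equation gives $C^{1,\alpha}$-bounds on slightly smaller disks, and Arzel\`a--Ascoli yields a $C^1$ convergent subsequence. For (iii) I would use the conformal invariance of the energy: a non-trivial harmonic $u:S^2\to M$ of energy less than $\varepsilon_*$ pulls back through stereographic projection to a harmonic map on $\mathbf R^2$ of the same total energy. Applying (i) on $D_R$ gives $|du|^2(0)\leqslant C_* R^{-2}\varepsilon_*$, and letting $R\to\infty$ forces $du\equiv 0$. For (iv), finite energy implies that the energy on the dyadic annuli $\{r\leqslant|z|\leqslant 2r\}$ tends to zero as $r\to 0$; (i) applied to Euclidean disks of radius $r/2$ centred on the middle circle yields $|du|(z)\leqslant C|z|^{-1}\sqrt{E_{\mathrm{ann}}(u)}$, and a Courant--Lebesgue argument gives continuous extension of $u$ across $0$. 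One then has a weakly harmonic map into $M$ which is bounded, and standard elliptic bootstrap on the extrinsic equation upgrades it to smooth.

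The main technical obstacle is (i): one has to choose the smallness threshold $\varepsilon_*$ so that the Moser iteration genuinely closes, i.e.\ so that the contribution of the $e(u)$-factor in the nonlinear term $e(u)(1+e(u))$ is absorbed by the control coming from the $L^1$-smallness. It is important to prove (i) directly by iteration, without invoking a blow-up/removability argument that would depend on (iv) or (iii), so that the logical ordering (i) $\Rightarrow$ (ii), (iii), (iv) remains non-circular. The remaining parts are then essentially corollaries of (i) together with well-known regularity and conformal-invariance techniques.
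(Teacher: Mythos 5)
The paper does not actually prove this proposition: it is quoted as ``by now standard facts, proved in~\cite{SaU}'', so your sketch is in effect a reconstruction of the Sacks--Uhlenbeck arguments rather than an alternative to anything in the text. Parts (i)--(iii) of your outline are sound and follow the standard route: $\varepsilon$-regularity via the Bochner inequality and Moser iteration, then (ii) by elliptic bootstrap on the extrinsic equation plus Arzel\`a--Ascoli, and (iii) by conformal invariance and letting $R\to\infty$ in (i). One small caveat on (i): on a \emph{Euclidean} disk the Bochner inequality is purely quadratic, $\Delta e(u)\leqslant K\,e(u)^2$ with $K$ controlled by the sectional curvature of $M$, and it is precisely this homogeneity that makes the constant $C_*$ in $\sup_{D_r}\abs{du}^2\leqslant C_*r^{-2}E_{2r}(u)$ independent of $r$; if you insist on the inhomogeneous form $Ke(1+e)$ you must rescale to the unit disk and check that the linear term does not spoil the scale invariance of the constant.

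The genuine gap is in (iv). From $E(A_k)\to0$ on dyadic annuli $A_k$ and the sup-estimate $\abs{du}(z)\leqslant C\abs{z}^{-1}\sqrt{E(A(\abs{z}/2,2\abs{z}))}$ you do get that the oscillation of $u$ on each circle $\partial D_{2^{-k}}$ tends to zero, but this does not produce a limit at the puncture: the distance between the images of consecutive circles is of order $\sqrt{E(A_k)}$, so continuity at $0$ requires the summability $\sum_k\sqrt{E(A_k)}<\infty$, which does not follow from mere finiteness of the total energy. The missing ingredient --- and the actual content of the Sacks--Uhlenbeck removable singularity theorem --- is a power decay $E(D_r)\leqslant Cr^\alpha$ for some $\alpha>0$. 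This is obtained by exploiting the holomorphy and integrability of the Hopf differential $\langle u_z,u_z\rangle\,dz^2$ on the punctured disk (equivalently, a Pohozaev-type identity on circles), which yields a differential inequality of the form $E(D_r)\leqslant C\,r\,\frac{d}{dr}E(D_r)$ for small $r$ and hence the decay. With that estimate your Courant--Lebesgue argument closes and the elliptic bootstrap finishes the proof; as written, the step ``a Courant--Lebesgue argument gives continuous extension'' would fail.
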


The combination of these facts with a covering argument is sufficient to conclude that a sequence of harmonic maps $u_n$ of bounded energy converges in $C^1$ away from the bubble points $\{x_1,\ldots, x_k\}$, and 
\begin{equation}
\label{eq01}
e(u_n)d\mathit{Vol}_\Sigma\rightharpoonup e(u)d\mathit{Vol}_\Sigma+\sum_{i=1}^km_i\delta_{x_i},
\end{equation}
see~\cite{SaU,Pa96}. Now we focus on one bubble point $x_i$. The bubble tree construction involves a few steps and depends on a ``renormalisation constant'' $C_R<\varepsilon_*/2$, which  will be chosen to be sufficiently small, see Prop.~\ref{2props}. Below we adopt the convention of denoting a subsequence of $(u_n)$ by the same symbol. The proofs of auxiliary Claims~\ref{c1}--\ref{c4} we refer to~\cite[Sect.~6]{Pa96}.

\medskip
\noindent
{\em Step~1: renormalisation.} Let $D$ be a neighbourhood of $x_i$ that does not contain other bubble points. We can suppose that the metric on $D$ is conformally Euclidean and regard $D$ as a disk $D(0,4\rho)$ in the Euclidean plane. By $dV$ we denote the Lebesgue measure on $D$. Let $\varepsilon_n$ be the largest number such that $\varepsilon_n\leqslant\min\{\rho,1/n\}$ and
\begin{equation}
\label{eq02}
\int_{D(0,2\varepsilon_n)}e(u)dV\leqslant\frac{m_i}{16n^2}.
\end{equation}
Denote by $D_n$ the disks $D(0,2\varepsilon_n)$ and by $c_n=(c_n^1,c_n^2)$ the centre of mass corresponding to the measure $e(u_n)dV$ on $D_n$,
\begin{equation}
\label{eq03}
c_n^j=\left(\int_{D_n}x^je(u_n)dV\right)/\left(\int_{D_n}e(u_n)dV\right).
\end{equation}
Further, let $\lambda_n$ be the smallest $\lambda$ such that
$$
\int_{D(c_n,\varepsilon_n)-D(c_n,\lambda)}e(u_n)dV\geqslant C_R,
$$
where $C_R$ is the renormalisation constant.
\begin{claim}
\label{c1}
After passing to a subsequence, we have $\abs{c_n}\leqslant\varepsilon_n/2n^2$, $\lambda_n\leqslant\varepsilon_n/n^2$. Besides, there exists a constant $C$ such that the image $u_n(\partial D_n)$ lies in the ball $B(u(x),C/n)$ in $M$.
\end{claim}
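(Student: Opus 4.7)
The plan is to extract a diagonal subsequence from $(u_n)$ using the weak convergence $e(u_n)dV\rightharpoonup e(u)dV+m_i\delta_{x_i}$ on $D$, which follows from~\eqref{eq01} since $D$ contains no bubble point other than $x_i=0$. For any bounded Borel set $E\subset D$ whose boundary carries no $e(u)dV$-mass, this yields $\int_E e(u_m)dV\to\int_E e(u)dV+m_i\chi_E(0)$ as $m\to\infty$, and the same holds with $x^je(u_m)dV$ in place of $e(u_m)dV$. The three conclusions of the claim will be secured by probing this limit on the sets $D_n$, $D(c_n,\varepsilon_n)\setminus D(c_n,\varepsilon_n/n^2)$, and $D(0,3\varepsilon_n)\setminus D(0,\varepsilon_n)$, then diagonalising.

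For the centre-of-mass bound, fix $n$ and let $m\to\infty$: the numerator $\int_{D_n}x^je(u_m)dV$ tends to $\int_{D_n}x^je(u)dV$, because the Dirac mass at $0$ contributes nothing to a first moment, while the denominator tends to $\int_{D_n}e(u)dV+m_i$, which exceeds $m_i/2$ for $n$ large. Combined with $\abs{\int_{D_n}x^je(u)dV}\leqslant 2\varepsilon_n\cdot m_i/(16n^2)$ from the defining property of $\varepsilon_n$, a diagonal extraction (relabelled $u_n$) gives $\abs{c_n}\leqslant\varepsilon_n/(2n^2)$. The scale bound proceeds in the same spirit: since $\abs{c_n}\leqslant\varepsilon_n/(2n^2)<\varepsilon_n/n^2$, the origin lies in the inner disk $D(c_n,\varepsilon_n/n^2)$, so the annulus $D(c_n,\varepsilon_n)\setminus D(c_n,\varepsilon_n/n^2)$ misses $x_i$ and the weak limit of $e(u_m)dV$ on it is the absolutely continuous $e(u)dV$, of total mass at most $m_i/(16n^2)<C_R$ for $n$ large. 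A further diagonal step forces $\int_{D(c_n,\varepsilon_n)\setminus D(c_n,\varepsilon_n/n^2)}e(u_n)dV<C_R$; interpreting the definition of $\lambda_n$ as the \emph{largest} $\lambda$ for which the outer annular energy remains $\geqslant C_R$ (as the literal ``smallest'' degenerates, the energy being monotone decreasing in $\lambda$), this gives $\lambda_n\leqslant\varepsilon_n/n^2$.

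For the image inclusion $u_n(\partial D_n)\subset B(u(x),C/n)$, the same weak-convergence step applied to $A_n=D(0,3\varepsilon_n)\setminus D(0,\varepsilon_n)$ yields $\int_{A_n}e(u_n)dV=O(\varepsilon_n^2)=O(1/n^2)$ along the subsequence. For every $y\in\partial D_n$ the disk $D(y,\varepsilon_n)\subset A_n$, so Proposition~\ref{auxSaU}(i) furnishes $\sup_{D(y,\varepsilon_n/2)}\abs{du_n}\leqslant C/(n\varepsilon_n)$; integrating around the circle $\partial D_n$ of length $4\pi\varepsilon_n$ gives $\diam u_n(\partial D_n)\leqslant C/n$. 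To anchor this shrinking set at $u(x)$, a Courant--Lebesgue selection of a radius in $[\varepsilon_n,3\varepsilon_n]$ produces a circle $\{\abs{y}=r^*\}$ on which $u_n$ has oscillation of size $O(1/n)$; a trace estimate on the thin annulus, combined with the strong $L^2$-convergence $u_n\to u$ on $D$ (refined by a further subsequence to quantitative rate), shows that the circle average of $u_n$ differs from that of $u$ by $O(1/n)$. Continuity of $u$ at $x_i$ identifies the latter as $u(x)+O(\varepsilon_n)=u(x)+O(1/n)$, and the gradient bound propagates the identification from $\{\abs{y}=r^*\}$ across the thin annulus to $\partial D_n$.

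The main obstacle is this last anchoring. The diameter estimate is a clean consequence of $\varepsilon$-regularity, but placing the shrinking circle $u_n(\partial D_n)$ within distance $O(1/n)$ of $u(x)$ (rather than merely $o(1)$) requires the quantitative $L^2$-trace comparison of circle averages and a careful further diagonal extraction to tighten the rate of the strong $L^2$ convergence. The rest is routine measure-theoretic bookkeeping to ensure all three bounds hold simultaneously along a common subsequence.
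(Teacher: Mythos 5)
The paper does not actually prove Claim~\ref{c1}: it defers all of Claims~\ref{c1}--\ref{c4} to \cite[Sect.~6]{Pa96}, so your proposal is being measured against Parker's argument rather than anything in the text. For the first two bounds you follow exactly the standard route: test the weak convergence $e(u_m)dV\rightharpoonup e(u)dV+m_i\delta_0$ against $x^j\mathbf{1}_{D_n}$ (the atom contributes nothing to the first moment, the denominator tends to at least $m_i$, and $\bigl|\int_{D_n}x^je(u)dV\bigr|\leqslant 2\varepsilon_n\cdot m_i/(16n^2)$), then against a closed annulus around $c_n$ avoiding the origin, and diagonalise; you are also right that the printed definition of $\lambda_n$ is degenerate and must be read as the critical radius at which the outer annular energy equals $C_R$ — with that reading your monotonicity argument for $\lambda_n\leqslant\varepsilon_n/n^2$ is correct. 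The one place where you take a genuinely different, and much heavier, route is the inclusion $u_n(\partial D_n)\subset B(u(x_i),C/n)$. Your chain ($\varepsilon$-regularity on $D(0,3\varepsilon_n)\setminus D(0,\varepsilon_n)$ for the $O(1/n)$ oscillation, then Courant--Lebesgue, a trace inequality, and a further extraction forcing $\norm{u_n-u}_{L^2}=O(\varepsilon_n/n)$ to anchor a circle average) can be pushed through, but the ``main obstacle'' you flag dissolves once you notice that $\varepsilon_n$ is defined by the \emph{limit} map $u$ alone, so for each fixed $n$ the circle $\partial D_n=\{\abs{z}=2\varepsilon_n\}$ is a fixed compact subset of $D\setminus\{x_i\}$. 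The locally uniform $C^1$ (hence $C^0$) convergence off the bubble points gives $\sup_{\partial D_n}\abs{u_m-u}\leqslant 1/n$ for $m\geqslant N(n)$, and after the same diagonal extraction you already perform, $\dist(u_n(y),u(x_i))\leqslant\abs{u_n(y)-u(y)}+\abs{u(y)-u(x_i)}\leqslant 1/n+2\varepsilon_n\sup\abs{du}\leqslant C/n$ for $y\in\partial D_n$, with no $\varepsilon$-regularity, trace estimates or quantitative $L^2$ rates needed. Your longer route does buy a diameter bound for $u_n$ on a whole annulus surrounding $\partial D_n$, which is more than the claim requires; the short route makes all three assertions instances of one and the same diagonal argument.
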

We use the centres of masses $c_n$'s and the scales $\lambda_n$'s to define the renormalisations
$$
R_n(z)=\lambda_n\cdot z+c_n,
$$
and the renormalised maps
$$
\tilde u_n=R_n^*u_n:S_n\to M,
$$
where $S_n=R_n^{-1}D_n$. By Claim~\ref{c1} and the definitions of the $c_n$'s and $\lambda_n$'s, the domains $S_n$ exhaust the whole Euclidean plane. By the conformal invariance, the energies $E(\tilde u_n)$ are also bounded. Further, we have the following statement.
\begin{claim}
\label{c2}
The renormalised maps $\tilde u_n$ are harmonic with respect to the Euclidean metric on $\mathbf R^2$. The measures $e(\tilde u_n)dV$ have centres of mass in the origin and satisfy the relations
$$
\lim_n\int_{S_n}e(\tilde u_n)dV=m_i\qquad\text{and}
\qquad\lim\int_{\mathbf R^2\backslash D(0,1)}e(\tilde u_n)dV=C_R.
$$
\end{claim}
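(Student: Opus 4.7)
The plan is to handle the four assertions of Claim~\ref{c2} in turn; the first two are essentially formal, while the two displayed limits carry the real content. For harmonicity of $\tilde u_n$, I would invoke the conformal invariance of the harmonic map equation in dimension two: since $R_n(z)=\lambda_n z+c_n$ is an affine similarity (hence conformal) and the metric on $D$ has been normalised to be conformally Euclidean, $u_n|_D$ is already harmonic for the flat metric, so $\tilde u_n=u_n\circ R_n$ is harmonic for $R_n^*g_{\mathrm{Eucl}}=\lambda_n^2 g_{\mathrm{Eucl}}$, and therefore for $g_{\mathrm{Eucl}}$ itself. The centre-of-mass claim is then a direct change of variables: combining $e(\tilde u_n)(z)=\lambda_n^2\,e(u_n)(R_n(z))$ with $dV(R_n(z))=\lambda_n^2\,dV(z)$, the first moment transforms as
$$
\int_{S_n}z^j\,e(\tilde u_n)(z)\,dV(z)=\frac{1}{\lambda_n}\int_{D_n}(y^j-c_n^j)\,e(u_n)(y)\,dV(y),
$$
and the right-hand side vanishes by the very definition~\eqref{eq03} of $c_n^j$.

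For the total-energy limit, conformal invariance of the two-dimensional Dirichlet integral gives $\int_{S_n}e(\tilde u_n)\,dV=\int_{D_n}e(u_n)\,dV$, so the task reduces to showing $\int_{D_n}e(u_n)\,dV\to m_i$. For the upper bound, since $D_n\to\{x_i\}$ and $u_n\to u$ in $C^1_{\mathrm{loc}}(D\setminus\{x_i\})$, Fatou's lemma applied to the non-negative sequence $e(u_n)\mathbf 1_{D\setminus D_n}$ yields $\liminf_n\int_{D\setminus D_n}e(u_n)\,dV\ge\int_D e(u)\,dV$; combined with the bubble-convergence identity $\int_D e(u_n)\,dV\to\int_D e(u)\,dV+m_i$ this gives $\limsup_n\int_{D_n}e(u_n)\,dV\le m_i$. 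For the lower bound, the limit $u$ extends smoothly across $x_i$ by Prop.~\ref{auxSaU}(iv), so $M_u:=\sup_D e(u)<\infty$; uniform convergence $e(u_n)\to e(u)$ on every compact subset of $D\setminus\{x_i\}$ then provides, after extracting a suitable diagonal subsequence, a uniform bound $e(u_n)\le M_u+1$ valid on the whole of $D\setminus D_n$ for every $n$ large. Consequently, for any fixed $\eta>0$,
$$
\int_{D_n}e(u_n)\,dV\ge\int_{D(x_i,\eta)}e(u_n)\,dV-(M_u+1)\pi\eta^2,
$$
and letting first $n\to\infty$ (using $\int_{D(x_i,\eta)}e(u_n)\,dV\to\int_{D(x_i,\eta)}e(u)\,dV+m_i$) and then $\eta\to 0$ yields the desired $\liminf\ge m_i$.

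The exterior limit is handled by the same change of variables, which gives
$$
\int_{\mathbf R^2\setminus D(0,1)}e(\tilde u_n)\,dV=\int_{D_n\setminus D(c_n,\lambda_n)}e(u_n)\,dV.
$$
Continuity of $\lambda\mapsto\int_{D(c_n,\varepsilon_n)\setminus D(c_n,\lambda)}e(u_n)\,dV$ together with the minimality property defining $\lambda_n$ identifies the integral on $D(c_n,\varepsilon_n)\setminus D(c_n,\lambda_n)$ with exactly $C_R$; the residual region $D_n\setminus D(c_n,\varepsilon_n)$ is, by the bound $|c_n|\le\varepsilon_n/(2n^2)$ from Claim~\ref{c1}, contained in the annulus $D(0,2\varepsilon_n)\setminus D(0,\varepsilon_n/2)$, on which the uniform pointwise control of $e(u_n)$ obtained in the previous step forces the energy contribution to be $O(\varepsilon_n^2)$ and thus to vanish in the limit.

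\textbf{Main obstacle.} The delicate step is the lower bound $\liminf_n\int_{D_n}e(u_n)\,dV\ge m_i$ in the total-energy paragraph: the Fatou estimate only delivers the upper direction, and pinning down a diagonal subsequence along which the rate of $C^1$-convergence of $u_n\to u$ away from $x_i$ dominates the shrinking of $D_n$ (so that $e(u_n)$ stays uniformly bounded on $D\setminus D_n$) is what makes the bubble mass $m_i$ provably captured inside $D_n$ rather than leaking into the neck. The quantitative choice $\int_{D_n}e(u)\,dV=O(n^{-2})$ and the $\varepsilon$-regularity of Prop.~\ref{auxSaU}(i) are essential to certifying that such a subsequence exists.
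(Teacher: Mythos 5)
The paper itself does not prove Claim~\ref{c2}: it defers all of Claims~\ref{c1}--\ref{c4} to \cite[Sect.~6]{Pa96}, so your argument should be measured against Parker's. Your treatment of harmonicity, of the centre of mass, and of the upper bound $\limsup_n\int_{D_n}e(u_n)\leqslant m_i$ is correct. The diagonal extraction you invoke for the pointwise bound $e(u_n)\leqslant M_u+1$ on $D\setminus D_n$ does work, but only for a reason you should state explicitly: $\varepsilon_n$ is defined through the \emph{limit} density $e(u)$ and the cap $\min\{\rho,1/n\}$, hence depends only on $n$ and $u$ and is unchanged when you pass to a subsequence and relabel; one may therefore take the $k$-th member of the subsequence far enough along that the locally uniform convergence has reached radius $2\varepsilon_k$. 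Were $\varepsilon_n$ defined through $e(u_n)$, this extraction would be circular.

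The genuine gap is in the exterior limit. You estimate the residual region $D_n\setminus D(c_n,\varepsilon_n)$, which is contained in the annulus $D(0,2\varepsilon_n)\setminus D(0,\varepsilon_n/2)$, by appealing to ``the uniform pointwise control of $e(u_n)$ obtained in the previous step''. That control was established only on $D\setminus D_n$, i.e.\ outside radius $2\varepsilon_n$; the residual annulus lies entirely \emph{inside} $D_n$, where no pointwise bound is available (a priori the concentration of $e(u_n)$ could sit near radius $\varepsilon_n$). The clean repair, which is Parker's and is the argument the paper reuses for the curvature densities in display~\eqref{peq2}, avoids pointwise bounds altogether: for each fixed $n$ the radii $\varepsilon_n/n^2$ and $2\varepsilon_n$ are fixed, so weak-$*$ convergence of $e(u_k)dV$ gives $\int_{D(0,\varepsilon_n/n^2)}e(u_k)\to\int_{D(0,\varepsilon_n/n^2)}e(u)+m_i\geqslant m_i$ and $\int_{D_n}e(u_k)\to\int_{D_n}e(u)+m_i\leqslant\bigl(1+\tfrac{1}{16n^2}\bigr)m_i$ as $k\to\infty$; a diagonal choice of $k=k(n)$ then yields $\int_{D_n\setminus D(0,\varepsilon_n/n^2)}e(u_n)=O(n^{-2})$. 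Since $D(0,\varepsilon_n/n^2)\subset D(c_n,\varepsilon_n)$ by Claim~\ref{c1}, this single estimate disposes of the residual annulus and simultaneously gives both inequalities in the total-energy limit, making your pointwise bound unnecessary. Finally, before concluding that the neck integral equals exactly $C_R$ you should record that $\int_{D(c_n,\varepsilon_n)}e(u_n)\geqslant C_R$ for large $n$ (it tends to $m_i\geqslant\varepsilon_*>2C_R$); otherwise the minimality defining $\lambda_n$ does not force equality.
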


\medskip
\noindent
{\em Step~2: constructing a bubble.}
Now we find a convergent subsequence of $(\tilde u_n)$. Choose a sequence $K_m$ of compact sets that exhaust $\mathbf R^2$. For each $m=1,2,\ldots$ one can apply Prop.~\ref{auxSaU} together with a covering argument to find a subsequence $(\tilde u_n)$ that converges away from a finite number of points. Taking a diagonal subsequence we obtain a new sequence that converges in $C^1$ on compact sets in $\mathbf R^2\backslash\{y_1,\ldots, y_\ell\}$ to a smooth map $\tilde u:\mathbf R^2\to M$.

Identify $\mathbf R^2$ via the stereographic projection with the unit sphere $S^2\subset\mathbf R^3$; we suppose that the north pole $p^+$ is mapped to the origin, the south pole $p^-$ to infinity, and the equator to the unit circle. The conformal invariance of energy implies that the limit map $\tilde u$ has finite energy, and by the statement~$(iv)$ in Prop.~\ref{auxSaU} extends to a harmonic map from $S^2$ to $M$, called {\em bubble sphere}. Also by the convergence of the $\tilde u_n$'s, we conclude
\begin{equation}
\label{eq1}
e(\tilde u_n)d\mathit{Vol}_{S^2}\rightharpoonup e(\tilde u)d\mathit{Vol}_{S^2}+\sum_{j=1}^\ell\bar m_j\delta_{y_j}+\nu(x_i)\delta_{p^-},
\end{equation}
where each $\bar m_j\geqslant\varepsilon_*$.
\begin{claim}
\label{c3} 
Each secondary bubble point $y_i$ lies in the northen hemisphere of $S^2$. Further,
if $E(\tilde u)<\varepsilon_*$, then $\tilde u$ is a constant map and either there are at least two secondary bubble points, $\ell\geqslant 2$, or there is only one bubble point and $\nu(x_i)=C_R$.
\end{claim}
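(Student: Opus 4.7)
The argument combines three inputs: the small-energy regularity Prop.~\ref{auxSaU}$(i)$, which forces $\liminf_n\int_{D(y_j,r)}e(\tilde u_n)\,dV\geqslant\varepsilon_*$ at every bubble point for every $r>0$; the sphere energy gap Prop.~\ref{auxSaU}$(iii)$; and the renormalisation data of Claim~\ref{c2} in its two forms --- the exact limit $\lim_n\int_{\mathbf{R}^2\setminus D(0,1)}e(\tilde u_n)\,dV = C_R$ and the ceiling $\limsup_n\int_{\mathbf{R}^2\setminus D(0,R)}e(\tilde u_n)\,dV\leqslant C_R$ for every $R>1$, the latter being a consequence of the minimality defining $\lambda_n$. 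The crucial numerical fact is $C_R<\varepsilon_*/2$.

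\textbf{Part~1: location of $y_j$.} I would show by contradiction that each $y_j$ satisfies $|y_j|<1$. The easy step excludes $|y_j|>1$: choose $r>0$ with $\overline{D(y_j,r)}\subset\mathbf{R}^2\setminus\overline{D(0,1)}$; then
\[
\varepsilon_*\leqslant\liminf_n\int_{D(y_j,r)}e(\tilde u_n)\,dV\leqslant\lim_n\int_{\mathbf{R}^2\setminus D(0,1)}e(\tilde u_n)\,dV=C_R<\varepsilon_*/2,
\]
a contradiction. For the boundary case $|y_j|=1$ I would zoom in at $y_j$ by the centre-of-mass rescaling of Step~1 applied at $y_j$ itself, extracting a further non-trivial harmonic sphere $v\colon S^2\to M$ whose energy density does not vanish on any open set (by unique continuation for the elliptic harmonic-map system). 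Together with the fact that the rescaling centres the mass of $v$ at the origin, this forces $v$ to deposit a definite amount of mass in the half-plane that corresponds, under the inverse rescaling, to the exterior of $\overline{D(0,1)}$. Pulling this back produces a uniformly positive fraction of $\bar m_j$ lying in $\mathbf{R}^2\setminus\overline{D(0,1)}$ for large $n$, contradicting the $C_R$ ceiling.

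\textbf{Part~2: constancy of $\tilde u$ and bubble counting.} Since $E(\tilde u)<\varepsilon_*$, Prop.~\ref{auxSaU}$(iii)$ forces $\tilde u$ to be constant, and Claim~\ref{c2} collapses to $m_i=\sum_{j=1}^{\ell}\bar m_j+\nu(x_i)$. Part~1 places every $\bar m_j\delta_{y_j}$ strictly inside the unit disk, so the limit measure restricted to $\mathbf{R}^2\setminus\overline{D(0,1)}$ reduces, in the $S^2$ compactification, to $\nu(x_i)\delta_{p^-}$; equating with the Claim~\ref{c2} exterior limit gives $\nu(x_i)=C_R$ regardless of $\ell$. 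If $\ell=0$ we would then have $m_i=C_R<\varepsilon_*$, contradicting the Sacks--Uhlenbeck lower bound $m_i\geqslant\varepsilon_*$; hence $\ell\geqslant 1$, and the case $\ell=1$ is the claim.

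\textbf{Main obstacle.} The decisive step is excluding $|y_j|=1$ in Part~1: the weak-convergence bound alone yields only the closed-hemisphere statement $|y_j|\leqslant 1$, and the strict inequality seems to require descending one level into the bubble-tree construction and combining the centre-of-mass condition with unique continuation for harmonic maps to force positive mass leakage into the exterior half-plane. Without it, a bubble sitting exactly on the unit circle could redistribute a portion of $\bar m_j$ into the exterior and invalidate the identity $\nu(x_i)=C_R$ in Part~2.
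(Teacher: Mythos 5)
The paper itself contains no proof of Claim~\ref{c3}: it explicitly refers the proofs of Claims~\ref{c1}--\ref{c4} to Parker~\cite[Sect.~6]{Pa96}, so there is no internal argument to compare yours against, and your proposal has to be judged on its own terms. Its easy parts are fine: the exclusion of $\abs{y_j}>1$, the constancy of $\tilde u$ via Prop.~\ref{auxSaU}$(iii)$, the count $\ell\geqslant 1$, and the deduction $\nu(x_i)=C_R$ \emph{once} every $y_j$ is known to lie strictly inside the unit disk. But the decisive step --- excluding $\abs{y_j}=1$ --- is not actually carried out, and the sketch you offer would not close the gap even if each of its ingredients were justified. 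First, the contradiction you aim for does not materialise: depositing ``a uniformly positive fraction of $\bar m_j$'' in $\mathbf{R}^2\setminus\overline{D(0,1)}$ is perfectly compatible with the ceiling $\limsup_n\int_{\mathbf{R}^2\setminus D(0,1)}e(\tilde u_n)\,dV\leqslant C_R$, because $C_R>0$; you would need that fraction to exceed $C_R$, and nothing in your argument produces a quantitative lower bound of that size (only $\bar m_j\geqslant\varepsilon_*>2C_R$ is known, and an unspecified fraction of it proves nothing). Second, the rescaled limit $v$ at $y_j$ need not be non-constant: all of $\bar m_j$ may again concentrate at tertiary bubble points, in which case unique continuation yields no mass in the limiting half-plane; and under a further rescaling at a tertiary point lying strictly inside that half-plane's complement, the region corresponding to the exterior of $D(0,1)$ degenerates to the empty set, so the obstruction does not propagate down the tree.

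The consequence is that your Part~2 is conditional on an unproved (and, as sketched, unprovable) step at exactly the point where the claim has content. The weak-convergence argument you correctly carry out only yields the closed hemisphere $\abs{y_j}\leqslant 1$, which is in fact the form in which Parker states and uses the location statement; the identity $\nu(x_i)=C_R$ in the case $\ell=1$ then still requires ruling out that the single bubble point sits on the equator and absorbs part of the exterior energy $C_R$ from outside $D(0,1)$, since in that configuration one only gets $\nu(x_i)\leqslant C_R\leqslant\nu(x_i)+\bar m_1$. Note also that the centre-of-mass normalisation of Claim~\ref{c2} cannot be invoked naively here: the first moment of the energy escaping towards the south pole is of order $\nu(x_i)\cdot\mathrm{rad}(S_n)$ and can balance $\bar m_1y_1$ even when $\nu(x_i)$ is small, so the boundary configuration is not excluded by moment considerations alone. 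You identified this boundary case as the main obstacle, which is accurate; but as it stands the obstacle is not overcome, and the proof is incomplete precisely there.
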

We can now iterate this renormalisation procedure, thus obtaining bubbles on bubbles. By Claim~\ref{c3} each iteration decreases the corresponding concentration masses by at least $C_R$. Hence, the process terminates after a finite number of iterations. The result is a finite tree of bubbles whose vertices are harmonic maps (the limit map $u$ and the bubble spheres) and the edges are bubble points. We now describe a partitioning of the initial maps $u_n$ that keeps a more careful track of what is happening near the south pole.

\medskip
\noindent
{\em Step~3: partitioning.}
Inside the disk $D_n$ we identify three domains: the disk $D(c_n,\varepsilon_n)$, the smaller disk $D(c_n,n\lambda_n)$, and the annulus $A_n$ between them. Considering each bubble point $x_i$ and restricting the initial maps on these domains yields the following sequences of maps:

\smallskip
\noindent
{\em Base maps.} These are obtained by restricting the maps $u_n$ to the complement of $D(c_n,\varepsilon_n)$ for each bubble point. The image of the boundary near the point $x_i$, by Claim~\ref{c1} is mapped into the ball $B(u(x_i),C/n)$. We extend $u_n$ over $D(c_n,\varepsilon_n)$ by coning off the image:
\begin{equation}
\label{eq04}
\bar u_n(r,\theta)=\frac{r}{\varepsilon_n}u_n(\varepsilon_n,\theta),
\end{equation}
where $(r,\theta)$ are polar coordinates in $D(c_n,\varepsilon_n)$, centred at $c_n$, and the multiplication on the right-hand side is understood in geodesic coordinates in $B(u(x_i),C/n)$.
Performing this at each bubble point, we obtain maps $\bar u_n:\Sigma\to M$.

\smallskip
\noindent
{\em Bubble maps.} Restricting $u_n$ to the disk $D(c_n,n\lambda_n)$ and renormalising gives the map $\tilde u_n$ from $D(0,n)$ to $M$. Regarding the latter as a map from the unit sphere and coning over the south pole, yields a bubble map $\overline{Ru_{n,i}}:S^2\to M$, where the index $i$ refers to the bubble point $x_i$ under consideration.

\smallskip
\noindent
{\em Neck maps.} These are restrictions of the initial maps $u_n$'s to the annular region $A_n$, that is the difference $D(c_n,\varepsilon_n)\backslash D(c_n,n\lambda_n)$.

The base maps are constructed to converge to the limit map $u$. The maps $\tilde u_n$ of Step~1 are now decomposed into the bubble maps, which converge on $S^2$ to the bubble $\tilde u$, and the neck maps, which are pushed to the south pole as $n\to+\infty$ and account for the term $\nu(x_i)\delta_{p^-}$ in~\eqref{eq1}. More precisely, we have the following statement.
\begin{claim}
\label{c4}
After a selection of a subsequence, we have:
\begin{itemize}
\item[(i)] $\bar u_n\to u$ in $W^{1,2}\cap C^0$ on the surface $\Sigma$;
\item[(ii)] $\overline{Ru_{n,i}}\to\tilde u$ in $W^{1,2}\cap C^0$ on compact sets in the complement of $\{y_1,\ldots,y_\ell\}$ in $S^2$;
\item[(iii)] the energy concentration $\nu(x_i)$ at the south pole is equal to $\lim\sup E(\left.u_n\right|A_n)$;
\item[(iv)] the following energy identity holds
$$
E(u_n)\to E(u)+\sum_{i=1}^k\left(\nu(x_i)+\lim_nE(\overline{Ru_{n,i}})\right).
$$
\end{itemize}
\end{claim}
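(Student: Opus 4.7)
The plan is to prove (i)--(iv) by partitioning each $u_n$ as in Step~3 into outer, annular, and inner pieces, analysing the three families separately, and exploiting conformal invariance of the energy under the rescalings $R_n$. Parts (i) and (ii) are the convergence statements for the base and bubble maps; parts (iii) and (iv) redistribute the energy across the partition and take limits.

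For (i), the standard covering argument based on Prop.~\ref{auxSaU}(i)--(ii) gives $C^1$-convergence of $u_n$ to $u$ on compact sets away from $\{x_1,\ldots,x_k\}$, and $\bar u_n$ coincides with $u_n$ outside $\bigcup_i D(c_n,\varepsilon_n)$. Inside each disk $D(c_n,\varepsilon_n)$, Claim~\ref{c1} places the boundary values in $B(u(x_i),C/n)$, so the radial cone defined by~\eqref{eq04} is trapped in the same ball; a direct polar-coordinate computation then yields $\sup\dist(\bar u_n,u(x_i))=O(1/n)$ and an energy bound of order $1/n^2$, which together with the continuity of $u$ gives $W^{1,2}\cap C^0$-convergence. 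Part (ii) is entirely analogous: Step~2 produces a diagonal subsequence with $\tilde u_n\to\tilde u$ in $C^1$ on compact subsets of $\mathbf R^2\setminus\{y_1,\ldots,y_\ell\}$, and $\tilde u$ extends to $S^2$ by Prop.~\ref{auxSaU}(iv); the analogue of Claim~\ref{c1} applied at scale $n\lambda_n$ controls the image of $\partial D(c_n,n\lambda_n)$, so the cone-off at the south pole is again harmless by the same polar estimate.

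For (iii), observe that the preimage of $A_n$ under $R_n$ is the annulus $D(0,\varepsilon_n/\lambda_n)\setminus D(0,n)$, which under stereographic projection corresponds to a shrinking punctured neighbourhood $U_n\ni p^-$ avoiding both the secondary bubble points $\{y_j\}$ and any compact set in $S^2\setminus\{p^-\}$ where $\tilde u_n\to\tilde u$ in $C^1$. Testing the measure convergence~\eqref{eq1} against the characteristic function of $U_n$, combined with the conformal invariance $E(u_n|_{A_n})=E(\tilde u_n|_{R_n^{-1}(A_n)})$, identifies $\limsup E(u_n|_{A_n})$ with $\nu(x_i)$. For (iv), integrating~\eqref{eq01} gives $E(u_n)\to E(u)+\sum_i m_i$, and one decomposes each $m_i$ along the partition at $x_i$: the outer shell $D(0,2\varepsilon_n)\setminus D(c_n,\varepsilon_n)$ contributes $O(1/n^2)$ by~\eqref{eq02} together with Claim~\ref{c1}; the annulus contributes $\nu(x_i)$ by (iii); and the inner disk contributes, via conformal invariance, $\lim_n E(\tilde u_n|_{D(0,n)})$, which equals $\lim_n E(\overline{Ru_{n,i}})$ since the cone near $p^-$ carries energy of order $1/n^2$ by the boundary bound used in (ii).

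The main technical difficulty is the coordination of the three scales $n\lambda_n\ll\varepsilon_n\ll\rho$: the annulus $A_n$ must capture precisely the energy that neither remains on the base nor rescales into the bubble, while both cone regions contribute negligibly. The specific choices of $\varepsilon_n$ in~\eqref{eq02} and of $\lambda_n$ through the renormalisation constant $C_R$, together with the sharp bounds of Claim~\ref{c1} on $|c_n|$ and $\lambda_n$, are what force the cone boundary values into ever-shrinking balls and make this coordination possible.
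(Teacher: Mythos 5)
The paper does not prove Claim~\ref{c4} itself: it explicitly refers the reader to Parker~\cite[Sect.~6]{Pa96}, and your outline --- partition into base, neck and bubble pieces, conformal invariance of the energy under $R_n$, and the $O(1/n^2)$ estimates for the cone extensions --- is precisely Parker's argument, so your approach matches the cited source. The one point to handle carefully in a full write-up is part~(iii): weak-$*$ convergence of measures cannot be tested directly against the $n$-dependent sets $U_n$; one should instead exhaust by fixed regions $S_n\setminus D(0,R)$, pass to the limit in $n$ using the $C^1$ convergence on compacta away from $\{y_1,\ldots,y_\ell\}$, and only then let $R\to\infty$ --- which is exactly what the surrounding structure of your sketch already supplies.
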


We end the discussion with the following statement that describes two fundamental properties of the bubble convergence: no energy loss in the necks and zero distance bubbling. For a proof we refer to~\cite{Pa96}.
\begin{prop}
\label{2props}
For a sufficiently small renormalisation constant $C_R$ at each bubble point $x_i$ there is no energy loss in the necks-- the mass $\nu(x_i)$ vanishes, and hence we have
$$
E(u_n)\to E(u)+\sum_{i=1}^k\lim_nE(\overline{Ru_{n,i}}).
$$
Further, the neck length $\diam(u_n(A_n))$ converges to zero. In particular, at each bubble point $x_i$ the images of the base map $u$ and the bubble sphere $\tilde u$ meet at $u(x_i)=\tilde u(p^-)$.
\end{prop}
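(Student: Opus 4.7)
\noindent
The plan is to reduce both assertions to a careful analysis of harmonic maps on long cylindrical necks. Under the conformal change $z=c_n+e^{-\tau+i\theta}$ the annulus $A_n=D(c_n,\varepsilon_n)\setminus D(c_n,n\lambda_n)$ becomes a cylinder $[\tau_n^-,\tau_n^+]\times S^1$ of length $\tau_n^+-\tau_n^-=\log(\varepsilon_n/(n\lambda_n))\to+\infty$. By the conformal invariance of the energy and of the harmonic map equation in dimension two, the restriction of $u_n$ pulls back to a harmonic map on this cylinder of total energy $\nu(x_i)+o(1)$.

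The first ingredient is the $\varepsilon$-regularity from Prop.~\ref{auxSaU}$(i)$. A dyadic cover of the neck, combined with the minimality built into the definition of $\lambda_n$, shows that provided $C_R<\varepsilon_*/2$, the energy of $u_n$ on every unit-length sub-cylinder is at most $\varepsilon_*$; otherwise a further concentration point would be extracted, contradicting the choice of scales. This yields the uniform pointwise bound
$$
\abs{du_n}^2(\tau,\theta)\leqslant C\int_{[\tau-1,\tau+1]\times S^1}e(u_n)
$$
throughout the interior of the neck.

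The second ingredient is the Pohozaev identity: for a harmonic map $v:[a,b]\times S^1\to M$ the function $P(\tau)=\int_{\{\tau\}\times S^1}(\abs{\partial_\tau v}^2-\abs{\partial_\theta v}^2)d\theta$ is constant in $\tau$. Decomposing $u_n$ in Fourier modes along the circle, the nonzero modes satisfy a linear equation perturbed by a nonlinear term of size $\abs{du_n}^2$; by the previous gradient bound this perturbation is small, so the nonzero modes decay exponentially towards the middle of the neck. The boundary conditions supplied by the $C^1$-convergence of the base and bubble maps (Claim~\ref{c4}$(i),(ii)$) force the zero-mode of $\partial_\tau u_n$ to vanish in the limit and so $P\equiv 0$ to leading order. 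Together these give a differential inequality $f'(\tau)\leqslant-\mu f(\tau)$ for the tail energy $f$, yielding exponential decay in $n$ and hence $\nu(x_i)=0$; the energy identity then follows from Claim~\ref{c4}$(iv)$.

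For the zero-distance statement one combines the exponential energy decay with the same $\varepsilon$-regularity bound to control the oscillation
$$
\diam(u_n(A_n))\leqslant\int_{\tau_n^-}^{\tau_n^+}\sup_\theta\abs{\partial_\tau u_n}\,d\tau,
$$
whose integrand is dominated by a factor that decays exponentially away from the endpoints. The integral therefore tends to zero as $\tau_n^\pm\to\pm\infty$, giving $u(x_i)=\tilde u(p^-)$. The main technical obstacle I foresee is establishing the exponential decay with a constant $\mu$ that is uniform in $n$ and at every level of the bubble tree: this requires $C_R$ to be chosen small enough \emph{before} the iteration of the renormalisation procedure, so that the Pohozaev-based differential inequality is activated simultaneously in all sub-cylinders extracted from the construction.
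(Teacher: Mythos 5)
A preliminary remark: the paper does not actually prove Proposition~\ref{2props}; it states it and defers entirely to Parker~\cite{Pa96} (``For a proof we refer to...''). So your proposal can only be measured against the literature argument. Your overall strategy --- conformal blow-up of the annulus $A_n$ to a long cylinder, $\varepsilon$-regularity on unit sub-cylinders, a Pohozaev/Hopf-differential balance between radial and angular energy, and exponential decay of the angular modes --- is the standard route to the energy identity and zero neck length, and it is essentially the analytic content of Parker's proof (he organises it through dyadic annuli and an induction on the renormalisation rather than an explicit cylinder ODE). One simplification you missed: the hypothesis that every unit sub-cylinder has energy below $\varepsilon_*$ needs no dyadic covering or extraction argument, since by the very definition of $\lambda_n$ the \emph{whole} neck carries energy at most $C_R+o(1)<\varepsilon_*/2$.

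There are, however, two genuine gaps at the crux. First, the Pohozaev constant: ``$P\equiv 0$ to leading order'' is not enough, because $P$ is integrated over a cylinder of length $L_n\to\infty$, so an unquantified $o(1)$ error in $P$ contributes $o(1)\cdot L_n$ to the radial energy, which you do not control; and the natural quantification obtained by averaging ($\abs{P}\leqslant C_R/L_n$) only gives a bounded, not vanishing, contribution. For exact harmonic maps the correct observation is that $P$ vanishes \emph{identically}: the Hopf differential of $u_n$ is holomorphic on the entire disk $D(c_n,\varepsilon_n)$ (the $u_n$ are globally smooth harmonic maps, there is no puncture), so the $(z-c_n)^{-2}$ Laurent coefficient, whose real part is $P$, is exactly zero. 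This replaces your zero-mode discussion and is essential. Second, and more seriously, the exponential decay of the angular modes controls the interior of the neck only in terms of the angular energies at the two ends of the cylinder; integrating your differential inequality gives total neck energy bounded by a constant times those boundary terms. As the construction stands, each end annulus of the neck may carry energy of order $C_R$, so your argument concludes only $\nu(x_i)\leqslant C\cdot C_R$ for a fixed constant, not $\nu(x_i)=0$. Showing that the end contributions actually tend to zero --- which is where $\varepsilon_n\to 0$, the factor $n$ in the inner radius $n\lambda_n$, and the $C^1$-convergence of the base and bubble maps on the bounding annuli must be used --- is the real content of the proposition and is absent from the sketch. The same issue infects the diameter estimate, which in addition needs a bound on the angular oscillation $\int_0^{2\pi}\abs{\partial_\theta u_n}\,d\theta$ of the circles, not only the radial integral you wrote.
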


\subsection{Curvature concentration respects the bubble tree}
Now we finish the proof of Theorem~\ref{MT}. First, since the maps $u_n$ converge to the limit map $u$ in $C^1$-topology on compact subsets of the complement $\Sigma\backslash\{x_1,\ldots, x_k\}$ one concludes that
$$
q_+(u_n)d\mathit{Vol}_\Sigma\rightharpoonup q_+(u)\mathit{Vol}_\Sigma
+\sum_{i=1}^kq_i\delta_{x_i}.
$$
Besides, as was proved in Sect.~\ref{proofMT0}, each $q_i\geqslant\pi/2$. Below we go through the steps of the bubble tree construction to show that each $q_i$ is a sum of $Q_+(\phi)$ over all bubble spheres $\phi$ in the bubble tree corresponding to the point $x_i$. The estimate $Q_+(\phi)\geqslant 2\pi$, and hence the improvement $q_i\geqslant 2\pi$, follow then from Cor.~\ref{cor:qbounds}.

\medskip
\noindent
{\em Step~1: renormalisation.} Let $x_i$ be a bubble point and $\tilde u_n=R_n^*u_n$ be the renormalised maps. We claim that the following relation holds:
$$
\lim_n\int_{S_n}q_+(\tilde u_n)dV=q_i.
$$
Indeed, it is straightforward to check that
\begin{equation}
\label{peq1}
\int_{S_n}q_+(\tilde u_n)dV=\int_{D_n}q_+( u_n)dV.
\end{equation}
Up to a selection of a subsequence, we can suppose that the original maps $u_n$ are such that
\begin{equation}
\label{peq2}
\frac{8n^2-1}{8n^2}q_i\leqslant\int_{B_n}q_+(u_n)dV\leqslant\frac{8n^2+1}{8n^2}q_i
\quad\text{and}\quad\int_{D_n\backslash B_n}q_+(u_n)dV\leqslant\frac{q_i}{8n^2}.
\end{equation}
This follows by repeating the argument in~\cite[p.~628]{Pa96}, used to prove Claim~\ref{c1}. These relations imply that the right-hand side in~\eqref{peq1} converges to $q_i$, thus demonstrating the claim.

\medskip
\noindent
{\em Step~2: constructing a bubble.}
In this step we find a subsequence of $(\tilde u_n)$ that converges on compact subsets of $\mathbf R^2\backslash\{y_1,\ldots, y_\ell\}$ to a smooth map $\tilde u:\mathbf R^2\to M$. The latter, regarded as a map from $S^2$, is a bubble sphere. Viewing all maps $\tilde u_n$ as maps from $S^2$, we have
$$
e(\tilde u_n)d\mathit{Vol}_{S^2}\rightharpoonup e(\tilde u)d\mathit{Vol}_{S^2}
+\sum_{j=1}^\ell\bar m_j\delta_{y_j},
$$
where each $\bar m_j\geqslant\varepsilon_*$. Here we used Prop.~\ref{2props}, according to which there is no energy loss at the south pole. Repeating the arguments in Sect.~\ref{proofMT0}, we obtain
$$
q_+(\tilde u_n)d\mathit{Vol}_{S^2}\rightharpoonup q_+(\tilde u)d\mathit{Vol}_{S^2}
+\sum_{j=1}^\ell\bar q_j\delta_{y_j}+\eta(x_i)\delta_{p^-},
$$
where each $\bar q_i\geqslant\pi/2$, and the term $\eta(x_i)$ accounts for a possible curvature loss at the south pole.

\medskip
\noindent
{\em Step~3: partitioning.}
Now we prove the following two statements:
{\em
\begin{itemize}
\item[(i)] the curvature concentration $\eta(x_i)$ at the south pole is equal to $\lim\sup Q_+(\left.u_n\right|A_n)$;
\item[(ii)] the following curvature identity holds:
$$
Q_+(u_n)\to Q_+(u)+\sum_{i=1}^k\left(\eta(x_i)+\lim_nQ_+(\overline{Ru_{n,i}})\right).
$$
\end{itemize}
}
First, we note that the amount of curvature on the cone extensions, used to define base and bubble maps, is small. 
More precisely, the energy of the cone extension~\eqref{eq04} can be estimated as
$$
\int_{D(c_n\varepsilon_n)}e(\bar u_n)dV\leqslant C/n^2
$$
for some constant $C$, see~\cite[p.~630]{Pa96}; a similar estimate also holds for the cone extension of the bubble maps $\overline{R}u_n$. Now combining inequalities~\eqref{CS1}-\eqref{CS2} with the estimate above, we obtain
$$
\int_{D(c_n,\varepsilon_n)}q_+(\bar u_n)dV\leqslant C/n^2
$$
for some constant $C$; similarly, one can estimate the curvature quantity for the cone extension of $\overline Ru_{n,i}$. Since the partitioning amounts for all the curvature, we conclude that
$$
Q_+(u_n)\to Q_+(u)+\lim_n\sum_{i=1}^k\left(Q_+(\left.u_n\right|A_{n,i})+Q_+(\overline{Ru_{n,i}})\right).
$$
The statement~$(i)$ follows by repeating the argument in~\cite[p.~631]{Pa96}, used to prove Claim~\ref{c4}, in combination with inequalities~\eqref{CS1}-\eqref{CS2}. The relation above together with the statement~$(i)$ yield the statement~$(ii)$.

We end with explaining that at each bubble point $x_i$ there is no curvature loss in the neck -- the quantity $\eta(x_i)$ vanishes. Indeed, using inequalities~\eqref{CS1}-\eqref{CS2} again, we have
$$
Q_+(\left.u_n\right|A_{n,i})\leqslant 2\sqrt{2}\max\abs{\Omega}\cdot E(\left.u_n\right|A_{n,i}).
$$
Thus, the statement~$(i)$ and Claim~\ref{c4}, yield
$$
\eta(x_i)\leqslant 2\sqrt{2}\max\abs{\Omega}\cdot\nu(x_i), 
$$
where $\nu(x_i)$ is a possible energy loss in the neck. By Prop.~\ref{2props} the quantity $\nu(x_i) $ vanishes, and hence so does $\eta(x_i)$. This shows that the curvature mass $q_i$ satisfies the relation 
$$
q_i=Q_+(\tilde u)+\sum_{j=1}^\ell\bar q_j,
$$ 
where $\tilde u$ is a bubble sphere and the sum is taken over the secondary bubble points. Iterating the procedure, we obtain that $q_i$ is indeed a sum of $Q_+(\phi)$ over all bubble spheres $\phi$ in the bubble tree at $x_i$.
\qed

\section{A few remarks}
\medskip
\noindent
{\em 1.} Mention that when the sequence $u_n$ in Theorem~\ref{MT} consists of holomorphic or anti-holomorphic maps, then the K\"ahler hypothesis on $M$ is unnecessary.
More precisely the following statement holds:

\medskip
\noindent
{\em Let $\Sigma$ and $M$ be a closed Riemannian surface and a closed Hermitian manifold respectively. Let $u_n:\Sigma\to M$ be a sequence of holomorphic maps of bounded energy that converges to a holomorphic map $u$ in the sense of bubble convergence. Then the limit curvature measure $dQ_+$ satisfies the conclusions of Theorem~\ref{MT}.}

\medskip
\noindent
The proof follows the same line of argument as in Sect.~\ref{proofMT0} and~\ref{proofMT}. The key point is that the Bochner identities in Sect.~\ref{prem} continue to hold for holomorphic maps into an arbitrary Hermitian manifold. So does the energy estimate
$$
E(u)\geqslant 4\pi/H,
$$ 
where $H$ is an upper bound for the holomorphic sectional curvature,  for holomorphic spheres.

\medskip
\noindent
{\em 2.} If a metric on the domain surface $\Sigma$ is allowed to vary, and the corresponding complex structures stay in a compact region in the moduli space, then the Sacks-Uhlenbeck bubble convergence theorem still continues to hold, see~\cite{Pa96,CT}, and so does Theorem~\ref{MT}. However, if the complex structures leave each bounded region, then the bubble tree convergence can fail; in particular, there can be energy loss in the limit. It is interesting to know whether this energy loss can be detected by the curvature quantity $Q_+(u)$.

\medskip
\noindent
{\em 3.} The no curvature loss in the necks statement, explained in Sect.~\ref{proofMT},  is essentially a consequence of the corresponding statement for the energy, see Prop.~\ref{2props}. However, it is interesting to know its proof  that does not rely on Prop.~\ref{2props}.

{\small

}

\end{document}